\numberwithin{equation}{section}
\definecolor{OrangeRed}{cmyk}{0,0.6,1,0}            % half magenta only, full yellow
\definecolor{DarkBlue}{cmyk}{1,1,0,0.20}
\definecolor{DarkGreen}{cmyk}{1,0,0.6,0.2}
\definecolor{myblue}{rgb}{0.66,0.78,1.00}
\definecolor{Violet}{cmyk}{0.79,0.88,0,0}
\definecolor{Lavender}{cmyk}{0,0.48,0,0}
\newtheorem{thm}{Theorem}[section]
\newtheorem{theorem}[thm]{Theorem}
\newtheorem{main theorem}[thm]{Main Theorem}
\newtheorem{corollary}[thm]{Corollary}
\newtheorem{lemma}[thm]{Lemma}
\newtheorem{lem}[thm]{Lemma}
\newtheorem{prop}[thm]{Proposition}
\theoremstyle{definition}
\newtheorem{defn}[thm]{Definition}
\newtheorem{rem}[thm]{Remark}
\def\C{\mathbb C}
\def\P{\mathbb P}
\def\B{\mathbb B}
\def\bcases{\begin{cases}}
\def\ecases{\end{cases}}
\newcommand{\D}{\mathbb D}
\renewcommand{\Im}{\operatorname{Im}\,}
\newcommand{\N}{\mathbb N}
\newcommand{\R}{\mathbb R}
\newcommand{\Z}{\mathbb Z}
\newcommand{\bea}{\begin{eqnarray*}}
\newcommand{\eea}{\end{eqnarray*}}
\newcommand{\be}{\begin{equation}}
\newcommand{\ee}{\end{equation}}
\newcommand{\ra}{\rightarrow}
\renewcommand{\epsilon}{\varepsilon}
\renewcommand{\phi}{\varphi}
\newcommand{\U}{\mathbb{U}}
\newcommand{\Hh}{\hat{H}}
\begin{document}

\title{Dynamics of transcendental H\'enon maps-II}

\author[L. Arosio]{Leandro Arosio$^{\dag}$}
\author[A.M. Benini]{Anna Miriam Benini$^{\ddag}$}
\author[J.E.  Forn{\ae}ss ]{John Erik Forn{\ae}ss}
\author[H. Peters]{Han Peters}

\today
\thanks{$^{\dag}$  Supported by the SIR grant ``NEWHOLITE - New methods in holomorphic iteration'' no. RBSI14CFME. Partially supported by the MIUR Excellence Department Project awarded to the Department of Mathematics, University of Rome Tor Vergata,  CUP E83C18000100006.}
\thanks{$^{\ddag}$ This project has received funding from the European Union's Horizon 2020 research and innovation programme under the Marie Sk\l odowska-Curie Grant Agreement No. 703269   COTRADY}
\thanks{Part of this work was done during the international research program "Several Complex Variables and Complex Dynamics"
at the Center for Advanced Study at the Academy of Science and Letters in Oslo during the academic year 2016/2017. }
\address{ L. Arosio: Dipartimento Di Matematica\\
Universit\`{a} di Roma \textquotedblleft Tor Vergata\textquotedblright\  \\
 Italy} \email{arosio@mat.uniroma2.it}
\address{ A.M. Benini: Dipartimento  di   Matematica Fisica e Informatica\\
Universit\'a di Parma, IT.  \\
} \email{ambenini@gmail.com}
\address{ H. Peters: Korteweg de Vries Institute for Mathematics\\
University of Amsterdam\\
the Netherlands} \email{hanpeters77@gmail.com}
\address{ J.E. Fornaess: Department of Mathematical Sciences\\
NTNU Trondheim, Norway} \email{john.fornass@ntnu.no}

\begin{abstract}
Transcendental H\'enon maps are the natural extensions of the well investigated complex polynomial H\'enon maps to the much larger class of holomorphic automorphisms. We prove here that transcendental H\'enon maps always have non-trivial dynamical behavior, namely that they always admit both periodic and escaping orbits, and that their Julia sets are non-empty and perfect.
\end{abstract}

\maketitle

\tableofcontents

\section{Introduction}
We investigate the dynamics of transcendental H\'enon maps $F\colon\C^2\ra\C^2$ defined as
\[
F(z,w)=(f(z)-\delta w, z),
\]
where $f\colon\C\ra\C$ is entire transcendental. The current paper continues the study started in \cite{henon1}, where the Julia set and Fatou set were introduced and various possible of Fatou components were constructed. Recall from \cite{FM89} that (compositions of) polynomial H\'enon maps are the polynomial automorphisms with non-trivial dynamical behavior. The goal of this paper is to show that transcendental H\'enon maps, a subclass of the holomorphic automorphisms, all have non-trivial behavior as well. We prove the following:

\begin{theorem}
The Julia set $J_F$ is non-empty and has no isolated points.
\end{theorem}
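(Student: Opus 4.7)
My plan is to establish both parts of the statement through a supply of hyperbolic saddle periodic points of $F$.

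\emph{Non-emptiness via saddle fixed points.} Fixed points of $F$ satisfy $w=z$ and $g(z):=f(z)-(1+\delta)z=0$. Since $g$ is transcendental entire, by the Great Picard theorem it either has infinitely many zeros $z_n\to\infty$, or it has no zeros and then $g=e^h$ for some entire $h$; in the latter case I would pass to the period-two equation $F^2(p)=p$, to which the same Picard argument applies and which cannot omit $0$ simultaneously with the fixed-point equation (this would force $f$ to take a very restrictive form). Having such a sequence $z_n$, consider
\[
DF(p_n)=\begin{pmatrix} f'(z_n) & -\delta \\ 1 & 0 \end{pmatrix},
\]
whose two eigenvalues have product $\delta$ and sum $f'(z_n)$. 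I would then show that $|f'(z_n)|\to\infty$ along some subsequence, using the Nevanlinna lemma on the logarithmic derivative applied to $g$; this forces one eigenvalue to be large and the other small, so each such $p_n$ is a hyperbolic saddle (independently of $|\delta|$). Any hyperbolic saddle lies in $J_F$ because its local unstable direction destroys normality of $\{F^n\}$, and so $J_F\ne\emptyset$.

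\emph{No isolated points.} Each saddle $p_n$ above has a global unstable manifold $W^u(p_n)$, which is an injectively immersed entire curve $\C\to\C^2$ fully contained in $J_F$. Given any $p\in J_F$, I would exploit non-normality of $\{F^n\}$ at $p$ via a Zalcman-type rescaling (in the spirit of Berteloot--Duval for holomorphic dynamics): there exist $p_k\to p$, $r_k\to 0$, and $n_k\to\infty$ such that $\zeta\mapsto F^{n_k}(p_k+r_k\zeta)$ converges locally uniformly to a non-constant entire map $\psi\colon\C\to\C^2$. Since the coordinates of $\psi$ are non-constant entire functions, they are surjective onto $\C$ with at most one Picard exception, so $\psi(\C)$ must meet infinitely many of the transcendental curves $W^u(p_n)$ (using that these curves sweep out unbounded sets of first-coordinate values $z_n$). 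Pulling any such intersection back through the rescaling produces a sequence in $W^u(p_n)\subset J_F$ converging to $p$, and hence $p$ is not isolated in $J_F$.

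\emph{Main obstacle.} The delicate step is the lower bound $|f'(z_n)|\to\infty$ in Step 1: I know no off-the-shelf result that gives it along a prescribed zero sequence of $g$. A safer strategy is to first choose $z$ with $|f'(z)|$ large by Wiman--Valiron theory, then produce a nearby exact solution of $g(z)=0$ by Rouch\'e on a small disc; this shifts the burden to checking non-vanishing of $g$ on a small circle, which is typically fine but needs justification. In Step 2 the subtle point is ensuring the Zalcman limit $\psi$ is not contained in an exceptional affine curve missing all the $W^u(p_n)$; showing this requires either a genericity argument for $\psi$ or a direct verification using the transcendence of $f$ ruling out such a degenerate limit.
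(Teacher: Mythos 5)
Your approach is genuinely different from the paper's, but it has real gaps in both steps.

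The paper does not try to produce \emph{saddle} periodic points at all. It proves non-emptiness by combining two separate existence results: a periodic orbit (Proposition~\ref{prop:one}) and an escaping orbit (Proposition~\ref{prop:two}); if $J_F$ were empty, $\C^2$ would be a single Fatou component containing a periodic point, forcing all orbits to be bounded and contradicting the escaping orbit. It then proves perfectness via pseudoconvexity of Fatou components (Proposition~\ref{prop:three}, a Hartogs/Levi continuation argument which again uses the escaping orbit, this time for $F^{-1}$), since a punctured ball is not pseudoconvex. Your plan instead hinges on hyperbolic saddles and unstable manifolds plus a Zalcman rescaling.

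In your Step~1, the case you dismiss as ``a very restrictive form'' genuinely occurs and cannot be discarded: by \cite[Prop.~3.3]{henon1} the maps $F(z,w)=(e^{g(z)}+w,z)$ (i.e., $\delta=-1$, $f=e^g$) have \emph{no} fixed points and \emph{no} period-two points, so neither the fixed-point equation nor the period-two equation can be solved; the paper has to construct period-4 points for exactly this family, via a nontrivial Wiman--Valiron argument. Your argument as written silently assumes this family away. Moreover your key estimate $|f'(z_n)|\to\infty$ along the zeros $z_n$ of $g=f-(1+\delta)z$ is, as you acknowledge, unjustified; zeros of $g$ have no a priori reason to sit in regions where Wiman--Valiron gives lower bounds on $|f'|$, and the Rouch\'e fallback you sketch still needs a device to put a zero of $g$ inside a Wiman--Valiron disk, which is not automatic since $g$ may omit $0$ entirely or have all its zeros in the exceptional set.

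Your Step~2 has two more substantial problems. First, Zalcman-type rescaling limits of $(F^{n_k})$ take values in $\P^2$, and there is no reason the limit $\psi$ avoids $\ell_\infty$; if $\psi$ maps into the line at infinity the ``coordinates are non-constant entire, hence nearly surjective'' argument evaporates. Second, and more seriously, two one-dimensional holomorphic curves in $\C^2$ need not intersect even when both first-coordinate projections are essentially surjective; matching a $z$-value of $\psi(\C)$ with a $z$-value swept by $W^u(p_n)$ does not produce an intersection point, only two points with the same first coordinate. Finally, even granting an intersection $\psi(\zeta_0)\in W^u(p_n)\subset J_F$, the rescaled preimages $p_k+r_k\zeta_0$ only converge to $p$ and satisfy $F^{n_k}(p_k+r_k\zeta_0)\to\psi(\zeta_0)\in J_F$; this does not place $p_k+r_k\zeta_0$ themselves in $J_F$, because membership in $J_F$ is not a closed condition under taking limits of forward images. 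The paper's pseudoconvexity argument sidesteps all of this: a Fatou component can contain no punctured ball, so $J_F$ has no isolated points, with no need to locate $J_F$-points converging to $p$ explicitly.
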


As a consequence the Julia set is uncountable. The result is a direct consequence of the following three propositions:

\begin{prop}\label{prop:one}
There exist periodic orbits.
\end{prop}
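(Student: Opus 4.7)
The plan is to reduce the existence of periodic orbits of $F$ to the existence of periodic points of a scalar entire transcendental map and then apply a classical theorem of Rosenbloom and Baker: every entire transcendental $h\colon\C\to\C$ has infinitely many periodic points of every exact period $n\ge 2$.

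First, I would compute the period-two equation for $F$. Expanding,
\[
F^2(z,w) = \bigl(f(f(z)-\delta w)-\delta z,\ f(z)-\delta w\bigr),
\]
so that $F^2(z,w)=(z,w)$ is equivalent to $f(z)=(1+\delta)w$ together with $f(f(z)-\delta w)=(1+\delta)z$. Assuming $\delta\neq -1$, set $h(z):=f(z)/(1+\delta)$, which is again entire transcendental; the first equation forces $w=h(z)$, and substitution collapses the second to $h(h(z))=z$. Thus period-$\le 2$ orbits of $F$ are in bijection with fixed points of $h^2$, and Rosenbloom's theorem with $n=2$ produces infinitely many such $z_0$. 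Each $(z_0,h(z_0))$ is an $F$-orbit of period $1$ or $2$, depending on whether $h(z_0)=z_0$ or not.

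Second, I would address the remaining case $\delta=-1$, where $F(z,w)=(f(z)+w,z)$. Here the period-$1$ and period-$2$ equations both collapse to $f(z)=0$; if $f$ has any zero $z_0$, then $(z_0,z_0)$ is a fixed point of $F$. The remaining sub-case is $f$ zero-free, in which Hadamard forces $f=e^{g}$ for some entire non-constant $g$. Passing to $F^3$, a direct calculation shows that $(z,w)$ is a period-$\le 3$ point if and only if
\[
e^{g(z)}+e^{g(w)}=z-w\quad\text{and}\quad e^{g(e^{g(z)}+w)}=w-z.
\]
The first equation cuts out an analytic curve in $\C^2$: for $z$ outside a discrete exceptional set, $\zeta\mapsto e^{g(\zeta)}+\zeta$ attains the value $z-e^{g(z)}$ by Picard, producing $w=w(z)$. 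Substituting $w(z)$ into the second equation yields an entire transcendental equation in $z$ whose zeros, again guaranteed by Picard (or by Rosenbloom applied to a suitable auxiliary map), correspond to period-$3$ orbits of $F$.

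The main obstacle is precisely this sub-case $\delta=-1$ with $f$ zero-free: the clean one-variable reduction to iterates of $h$ is unavailable, and one must parametrize one component of the variety $\{F^3=\mathrm{id}\}$ via an auxiliary Picard argument and then extract a common zero from the other component by a second Picard application, carefully tracking the exceptional values of the several auxiliary transcendental functions that appear to ensure none of them conspires to obstruct the construction.
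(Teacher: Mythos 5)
Your reduction for $\delta\neq -1$ is correct and essentially reproves the content of \cite[Prop.\ 3.3]{henon1} that the paper cites: setting $h=f/(1+\delta)$ the period-$\le 2$ equation collapses exactly to $h(h(z))=z$, and Rosenbloom's theorem on fixed points of $f^n$ for transcendental entire $f$ gives infinitely many solutions. The same reasoning for $\delta=-1$ correctly isolates the obstruction to the zero-free case $f=e^g$. Up to this point you are in step with the paper, modulo the use of a classical one-variable theorem rather than a direct argument.

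The sub-case $F(z,w)=(e^{g(z)}+w,z)$ is where the genuine gap lies, and you have yourself flagged it. Your period-$3$ system
\[
e^{g(z)}+e^{g(w)}=z-w,\qquad e^{g(e^{g(z)}+w)}=w-z
\]
is computed correctly, but the proposed resolution does not close. First, the locus of the first equation is the zero set of an entire function on $\C^2$; it is an analytic curve, but there is no reason it should admit a holomorphic (or even continuous) branch $w=w(z)$, since the fibre over a given $z$ is an infinite discrete set and the curve may have infinitely many components, none a graph over the $z$-axis. ``Picard produces $w(z)$'' is therefore not a well-defined step. Second, even granting a local parametrization of a branch, the substituted equation in $z$ is not visibly an \emph{entire} function of $z$ (it lives on a branch over a punctured domain), so Picard is not directly applicable; and even where Picard-type value distribution could be invoked, one must rule out that $0$ is the omitted value for the particular auxiliary function that appears, which your sketch does not address. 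Third, and more structurally, it is not clear that period-$3$ points must exist at all for such $F$: the fixed-point and period-$2$ equations both degenerate to $f(z)=0$ and hence fail; a priori period-$3$ could degenerate similarly for some $g$. The paper sidesteps all of this by using period $4$: the equation $F^{2}=F^{-2}$ produces the symmetric system
\[
g(w+e^{g(z)})-g(w)=\pi i,\qquad g(z-e^{g(w)})-g(z)=-\pi i,
\]
which, restricted to a neighbourhood of the diagonal $z=w$, linearizes to the single scalar equation $g'(z)e^{g(z)}=\pi i$. That equation is then solved quantitatively via Wiman--Valiron estimates on a well-chosen disk where $g$ behaves like a high-degree monomial, and the full system is solved by a Rouch\'e/degree argument using explicit bounds on $DG$. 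No soft Picard-type argument replaces these estimates, because one needs not just that certain entire functions attain a value, but that they do so \emph{simultaneously} at a common $(z,w)$ and in a controlled region. Your proposal would need to be reworked along these lines to cover this case.
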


\begin{prop}\label{prop:two}
There exist escaping orbits.
\end{prop}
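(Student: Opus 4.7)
The plan is to exhibit an escaping $F$-orbit by starting at a point $(z_0, 0)$ where $z_0$ has a very fast-escaping forward orbit under $f$, so that the $-\delta w$ correction appearing in $F$ becomes negligible compared to the growth of $f$ along the orbit.

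\emph{Step 1.} Since $f$ is entire transcendental, $M(r,f)/r^k \to \infty$ for every $k$. By Wiman--Valiron theory (or by selecting a point in a direct tract of $f$, i.e.\ an unbounded component $V$ of $\{z \in \C : |f(z)|>R\}$ for $R$ large), one can choose $z_0 \in \C$ such that the iterates $\zeta_n := f^n(z_0)$ escape super-polynomially, say $|\zeta_{n+1}| \geq |\zeta_n|^3$, and such that on disks $D_n := \{z\in\C : |z-\zeta_n|<\rho_n\}$ of suitable radii, $f$ is approximated by a monomial (of central index $N_n \to \infty$), yielding the covering property $f(D_n) \supset \{w : |w-\zeta_{n+1}| < c\rho_{n+1}\}$ for some fixed $c > 0$.

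\emph{Step 2.} Writing $F^n(z_0, 0) = (a_n, a_{n-1})$ with $a_{-1}=0$, $a_0 = z_0$, and recursion $a_{n+1} = f(a_n) - \delta a_{n-1}$, I would show inductively that $a_n \in D_n$ for every $n \geq 0$. The inductive step uses $f(a_n) \in f(D_n)$ together with
\[
|a_{n+1}-f(a_n)| = |\delta a_{n-1}| \leq |\delta|\bigl(|\zeta_{n-1}|+\rho_{n-1}\bigr),
\]
and the super-polynomial escape guarantees that this perturbation fits inside the covering disk of $f(D_n)$, forcing $a_{n+1} \in D_{n+1}$. Consequently $|a_n|\geq |\zeta_n|-\rho_n \to\infty$.

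\emph{Main obstacle.} The delicate point is to organize the radii $\rho_n$ so that they are simultaneously small enough for the monomial (Wiman--Valiron) approximation to be valid on $D_n$, yet large enough to absorb the accumulated perturbation from the $-\delta w$ term. The super-polynomial rate of escape creates the necessary slack, but the bookkeeping has to be carried out carefully. A conceptually cleaner alternative is to work inside a direct tract of $f$ using logarithmic coordinates, in which $f$ becomes a perturbation of an exponential and the H\'enon recursion can be controlled directly; in either formulation, the heart of the estimate is the near-monomial behaviour of $f$ around its maximum-modulus points, which makes the quadratic correction $\delta a_{n-1}$ harmless.
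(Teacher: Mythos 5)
The intuition you identify — Wiman--Valiron monomial behaviour near points of maximum modulus, and the $-\delta w$ term being small compared to the resulting growth — is exactly the intuition behind the paper's proof. But the \emph{direction} of your argument is wrong, and this is not a bookkeeping issue.

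The forward shadowing in your Step~2 cannot be made to close up. Write $a_n$ for the first coordinate of $F^n(z_0,0)$ and set $e_n := a_n - \zeta_n$ with $\zeta_n = f^n(z_0)$. Then $e_0=e_1=0$, $e_2 = -\delta\zeta_0$, and in general
\[
e_{n+1} \;=\; \bigl(f(\zeta_n+e_n)-f(\zeta_n)\bigr) - \delta\zeta_{n-1} - \delta e_{n-1}
\;\approx\; f'(\zeta_n)\,e_n - \delta\zeta_{n-1} - \delta e_{n-1}.
\]
But in the Wiman--Valiron regime $|f'(\zeta_n)| \sim N_n M_n/r_n$, which is enormous, while the Wiman--Valiron disk radius is only $\rho_n \sim r_n/N_n^{\alpha}$, and one checks that $\rho_{n+1}/\rho_n \ll |f'(\zeta_n)|$. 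So the error picks up a factor $\prod_k |f'(\zeta_k)|$, grows strictly faster than the radii $\rho_n$, and the inductive claim $a_n\in D_n$ breaks after finitely many steps. This is the expected behaviour: the horizontal direction is strongly expanding, and one cannot shadow a pre-chosen reference orbit \emph{forward} along an expanding direction. Moreover your Step~1 already presupposes Eremenko's theorem for $f$ (an orbit $f^n(z_0)$ that stays near maximum-modulus points and avoids the exceptional set of the WV estimates); that statement itself is not obtained by picking $z_0$ first, but by the same pullback mechanism you are trying to avoid.

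The paper's proof turns the argument around. It chooses radii $r_n$ and maximum-modulus points $\zeta_n$ freely (which is what lets one dodge the WV exceptional set and not worry about whether $f(\zeta_{n-1})$ is a max-modulus point), defines domains $D_n$ in the WV disks, and builds \emph{inverse branches} $\varphi_1$ of $f$ on $D_1$ and then recursively $\varphi_{n+1}$ as an inverse branch of $f-\delta\varphi_n$ on $D_{n+1}$, with $|\varphi_n'|\le 1$. The $-\delta w$ perturbation then appears inside $f-\delta\varphi_n$, where it is harmless because $|\varphi_n|$ is bounded by $r_n\ll M_n$. The escaping point is obtained as the intersection $\bigcap_n F^{-n}(\Gamma_n)$ of nested preimages of the graphs $\Gamma_n=\{(z,\varphi_n(z)):z\in D_n\}$ (note that $\Gamma_0 = D_0\times\{0\}$, so the escaping point does have the form $(z_0,0)$ — but $z_0$ is a limit of the nested preimages, not a point you can name in advance). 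In the backward direction the expansion becomes contraction, the inverse-branch bounds give compact nesting, and no error accumulation occurs. To repair your argument you would have to replace the forward induction by a pullback of this type; as written, Step~2 fails.
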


\begin{prop}\label{prop:three}
Every Fatou component is pseudoconvex.
\end{prop}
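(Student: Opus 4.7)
Plan: I would verify pseudoconvexity of $U$ via the Kontinuit\"atssatz characterization. Recall that an open set $U\subset\C^2$ is pseudoconvex if and only if, for every injective holomorphic map $\Phi$ from a neighborhood of the closed bidisc $\overline{\D}^2$ into $\C^2$ satisfying $\Phi(H_\epsilon)\subset U$---where $H_\epsilon:=\{(z,w)\in\D^2:|z|<\epsilon\ \text{or}\ |w|>1-\epsilon\}$ is the standard Hartogs figure---we have $\Phi(\D^2)\subset U$. Suppose for contradiction that some Fatou component $U$ admits such a $\Phi$ with $\Phi(H_\epsilon)\subset U$ but $\Phi(\D^2)\not\subset U$.

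Since $\{F^n\}$ is normal on the Fatou component $U$, I would extract a subsequence $\{F^{n_k}\}$ converging locally uniformly on $U$ (either to a holomorphic map $U\to\C^2$ or to infinity in the spherical sense). The pulled-back maps $G_k:=F^{n_k}\circ\Phi$ are holomorphic on a neighborhood of $\overline{\D}^2$ and inherit this convergence on $H_\epsilon$. The main task is to upgrade convergence of $\{G_k\}$ from $H_\epsilon$ to the full polydisc $\D^2$; once this is done, $\{F^{n_k}\}$ is normal on the open set $\Phi(\D^2)\subset\C^2$, so $\Phi(\D^2)$ lies in the Fatou set, and since $\Phi(\D^2)$ is connected and meets $U$ via $\Phi(H_\epsilon)$ we conclude $\Phi(\D^2)\subset U$, contradicting the choice of $\Phi$.

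For the upgrade in the case of a bounded limit, the key observation is that $\log\|G_k-G_l\|$ is plurisubharmonic on a neighborhood of $\overline{\D}^2$. Local uniform convergence on $H_\epsilon$ forces this function to be bounded above by some $\log\epsilon_{k,l}\to-\infty$ on compact subsets of $H_\epsilon$. A slice-wise maximum principle then transports this bound: for each $z_0$ with $|z_0|\geq\epsilon$, the slice $\{z_0\}\times\D$ meets $H_\epsilon$ in the annulus $\{z_0\}\times\{1-\epsilon<|w|<1\}$, and subharmonicity in $w$ together with the maximum principle propagate the estimate from any interior circle lying in that annulus to the whole slice disc; slices with $|z_0|<\epsilon$ lie entirely inside $H_\epsilon$ already. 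Thus $\{G_k\}$ is uniformly Cauchy on $\D^2$ and converges locally uniformly there.

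The main obstacle is the escape case, where $F^{n_k}\to\infty$ on $U$: the Euclidean maximum principle controls \emph{upper} bounds of plurisubharmonic functions, not lower bounds on $\|G_k\|$, so the argument above does not immediately propagate divergence from $H_\epsilon$ to $\D^2$. The natural remedy is to regard the $G_k$ as holomorphic maps $\D^2\to\P^2$ and to rerun the slice-wise argument in the Fubini--Study metric, replacing $\log\|G_k-G_l\|$ by the logarithm of the chordal distance between iterates, which is still plurisubharmonic where it is controlled. This unifies both cases into a single normality statement for $\{G_k\}$ on $\D^2$ and completes the contradiction.
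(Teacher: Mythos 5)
The first half of your argument — set up via the Kontinuit\"atssatz, pull back by $\Phi$ to get $G_k$ on a neighborhood of $\overline{\D}^2$, and propagate convergence from the Hartogs figure to the full bidisc in the bounded case by applying the slice-wise maximum principle to $\log\|G_k-G_l\|$ — is sound and essentially parallels the paper's treatment of the bounded case (the paper uses the maximum principle directly on $\|F^{n_k}\|$ to show $F^{n_k}(\hat H)\subset B(0,R)$, then extracts a limit; your Cauchy variant is equivalent).

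The gap is in the escape case, and it is the crux of the whole proof. Your proposed remedy — "replacing $\log\|G_k-G_l\|$ by the logarithm of the chordal distance between iterates, which is still plurisubharmonic where it is controlled" — does not work: for holomorphic maps $g,h:\D^2\to\P^2$ with lifts $\hat g,\hat h:\D^2\to\C^3$, the log of the chordal (Fubini--Study) distance is of the form $\log\|\hat g\wedge\hat h\|-\log\|\hat g\|-\log\|\hat h\|$, and while the first term is psh, the subtracted terms destroy plurisubharmonicity. So the slice-wise maximum principle simply does not apply, and divergence on $H_\epsilon$ cannot be propagated this way. In fact the threat you must rule out is concrete: $F^{n_k}$ could escape to infinity on $H_\epsilon$ but take bounded (say, zero) values somewhere in the interior of $\hat H$, in which case the spherical limit on $\hat H$ would have an indeterminacy point and fail to be a holomorphic map into $\P^2$. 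The paper handles this with considerably more machinery: it uses the existence of an escaping orbit for $F^{-1}$ (Proposition~\ref{prop:two}) to produce points $p_k\in B(0,R)\setminus F^{n_k}(\hat H)$, then defines the radial projections $\varphi_k=\pi_k\circ F^{n_k}:\hat H\to\ell_\infty\cong\hat\C$, which are genuinely holomorphic on all of $\hat H$. It then invokes Levi's Hartogs extension theorem for meromorphic functions to extend the limit $\varphi$ from $H$ to a meromorphic $\psi$ on $\hat H$, and proves (via a family-of-curves construction, Vitali, and a Hurwitz-theorem contradiction) both that $\psi$ has no indeterminacy points and that $F^{n_k}\to\psi$ locally uniformly on $\hat H$. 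Your sketch contains none of this, and in particular never uses the escaping-orbit hypothesis that the theorem depends on; that omission is a reliable sign that the argument as written would not go through.
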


Note that the orbits inside a Fatou component containing a periodic orbit must all be bounded, hence Propositions \ref{prop:one} and \ref{prop:two} together imply that the Julia set is non-empty. The fact that the Julia set is perfect follows from Proposition \ref{prop:three}, since a punctured ball is not pseudoconvex.

We note that Propositions \ref{prop:one}, \ref{prop:two}, and \ref{prop:three} all hold for polynomial H\'enon maps, \ref{prop:two} following directly from the existence of the filtration \cite{HOV1}. In the polynomial setting pseudoconvexity of bounded Fatou components is again immediate, for unbounded components it follows from pluriharmonicity of the pluricomplex Green function $G^+$. Alternatively pseudoconvexity follows from the fact that the Julia set $J^+$ equals the support of a positive closed $(1,1)$-current. Observe that none of the proofs from the polynomial setting hold for transcendental maps, and indeed, the proofs given here are quite different.

Propositions \ref{prop:one} and \ref{prop:two} will be proved in section 2, relying heavily on Wiman-Valiron Theory for transcendental maps in one variable. We prove in fact that there must always be periodic points of order $1$, $2$ or $4$. The existence of escaping orbits closely follows ideas of \cite{Eremenko89}. In section 3 we prove that through every of the escaping points we construct, which we will refer to as Eremenko escaping points, there exists a complex curve of escaping points obtained as a strong stable manifold. We note that it remains open whether all points in the escaping set are connected to infinity. The same question is open for transcendental maps in one complex variable, where this is known as Eremenko's Conjecture.

In section 4 we prove that the Julia set can be equal to all of $\mathbb C^2$, by constructing a saddle fixed point with a dense stable manifold. Recall that the Julia set of an entire function in one complex variable is either the whole complex plane or has no interior; a fact that holds trivially for polynomials and for polynomial H\'enon maps similarly. Whether the same dichotomy holds for transcendental H\'enon maps remains to be decided.

In section 5 we prove Proposition \ref{prop:three}, which implies in particular that there are no isolated points in the Julia set. A main ingredient on the proof is the existence of an escaping orbit for the inverse of $F$, which is guaranteed by Proposition \ref{prop:two}. We emphasize that the Fatou components in Proposition \ref{prop:three} refer to the $\mathbb P^2$-Fatou set. We will discuss the distinction between the different definitions of the Fatou set in section 5. We note that the other statements in this paper hold regardless of which compactification of $\mathbb C^2$ is considered for normality, and hence we will not specify it.

\section{The Julia set is not empty}

The proofs of Propositions \ref{prop:one} and \ref{prop:two} rely on main results in Wiman-Valiron Theory in one complex variable, according to which entire transcendental  functions behave almost like polynomials of very high degree near points whose image has large modulus. We will start with a brief recollection of results in Wiman-Valiron Theory, followed by the proofs of Propositions \ref{prop:one} and \ref{prop:two}.

\subsection{Basics of Wiman-Valiron Theory}\label{sect:WV}
For the contents of this Section we refer to \cite{Eremenko89}, \cite{Langley07}.

Let $f(z)=\sum_{0}^\infty a_n z^n$ be an entire transcendental function.
For any $r>0$, the terms $|a_n |r^n\ra 0$ as $n\ra\infty$, hence for any $r$ there is a maximal term. Let $N(r)$ be the index of the maximal term. When such a maximal term is not unique, choose the largest index. The function $N(r)$ is called the \emph{central index} of $f.$ It is an increasing function and $N(r)\ra\infty$ for $r\ra\infty$.

 Let $M(r)$ denote the \emph{maximum modulus} of $f$; that is,
\[M(r):=\max_{|z|=r} |f(z)|.\]
Since $f$ is a transcendental entire function, we have
\begin{equation}\label{eqtn:WVlogMr}
\lim_{r\to\infty}\frac{\log M(r)}{\log r}=\infty,
\end{equation}
in particular for every $k>0$, $M(r)>r^k$ for large $r$.

Recall that a  set $E\subset \R$ has \emph{finite logarithmic measure} if $\int_{[1,\infty]\cap E}\frac{dt}{t}<\infty$.

By \cite[Lemma 2.2.8] {Langley07} we have the following upper bound: for any $\epsilon$,
\begin{equation}\label{eqtn:M vs N}
N(r)\leq (\log M(r))^{1+\epsilon}
\end{equation}
outside a set $E_\epsilon$ of finite logarithmic measure (which tends to infinity as $\epsilon\ra0$).

The main result in Wiman-Valiron Theory is the following (\cite{Eremenko89}, \cite[Theorem 2.2.20]{Langley07}).
\begin{thm}[Wiman-Valiron Estimates]\label{thm:WV}
Let $f$ be entire transcendental, $\frac{1}{2}<\alpha<1$, and let $q$ be a positive integer. For $r > 0 $ let $\zeta_r$ be a point of maximum modulus for $r$, that is, such that $|\zeta_r|=r$ and $|f(\zeta_r)|=M(r)$. If $z$ satisfies
\begin{equation}
|z-\zeta_r|< \frac{r}{(N(r))^\alpha} \label{Size of WV disks}
\end{equation}
then
\begin{align}
\label{eqtn:WVEstimatef}  f(z)&=\left(\frac{z}{\zeta_r}\right)^{N(r)}f(\zeta_r)(1+\epsilon_0),\\
 %\label{eqtn:WVEstimatefprime} f'(z)&=\frac{N(r)}{\zeta_r} f(z)(1+\epsilon_1),\\
 %=\frac{N(r)}{\zeta_r}\left(\frac{z}{\zeta_r}\right)^{N(r)}f(\zeta_r)(1+\epsilon_0)(1+\epsilon_1),\\
\label{eqtn:WVEstimatefj} f^{(j)}(z)&= \frac{N(r)^j}{\zeta_r^j}f(z)(1+\epsilon_j),
 %f^{(j)}(z)&=\frac{N(r)^j}{z^j}\left(\frac{z}{\zeta_r}\right)^{N(r)}f(\zeta_r)(1+\epsilon_j)
\end{align}
%The estimate for f^{(j)}(z) is written differently and can be found in Langley, theorem 2.2.20. Careful because there the estimate of the Wiman-Valiron disk is also different. Double check in Bergweiler.
for all $1\leq j\leq q$, where  $\epsilon_i$ are functions converging uniformly to $0$ in $z$ as $r\ra\infty$, for $r$ outside an exceptional set $E$ of finite logarithmic measure.
\end{thm}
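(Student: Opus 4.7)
The plan is to use the Wiman--Valiron concentration principle: at radius $r$ the Taylor series $f(z)=\sum a_nz^n$ is sharply concentrated around the maximal-term index $N(r)$, so that near $\zeta_r$ the function $f$ is very close to its dominant monomial $a_{N(r)}z^{N(r)}$, provided one excludes a sparse set of radii.

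The first step is a concentration lemma. Set $\mu(r)=|a_{N(r)}|r^{N(r)}$. Outside an exceptional set $E$ of finite logarithmic measure, and for a suitable $\beta\in(1/2,\alpha)$,
\begin{equation*}
\sum_{|n-N(r)|>N(r)^{\beta}}|a_n|r^n = o(\mu(r)),\qquad r\to\infty,
\end{equation*}
and in particular $M(r)=\mu(r)(1+o(1))$ and $f(\zeta_r)=a_{N(r)}\zeta_r^{N(r)}(1+o(1))$. The proof uses the convexity of $\log\mu(e^t)$ in $t$ together with the bound (\ref{eqtn:M vs N}), which pins down $N(r)$ as essentially the logarithmic derivative of $\log\mu(r)$ off a sparse set. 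From this concentration one reads off a Gaussian-type decay of $|a_n|r^n/\mu(r)$ in the variable $n-N(r)$ with effective width $\ll N(r)^{\beta}$.

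With this in hand, write $z=\zeta_r(1+w)$ with $|w|<N(r)^{-\alpha}$, and expand
\begin{equation*}
f(z)=(z/\zeta_r)^{N(r)}\sum_n a_n\zeta_r^n\,(1+w)^{n-N(r)}.
\end{equation*}
Split the sum at $|n-N(r)|\leq N(r)^{\beta}$: on the bulk one has $|(n-N(r))w|\leq N(r)^{\beta-\alpha}\to 0$, so $(1+w)^{n-N(r)}=1+o(1)$ uniformly in $z$, while the tail contributes $o(f(\zeta_r))$ by the concentration lemma. This gives (\ref{eqtn:WVEstimatef}) with $\epsilon_0$ tending to $0$ uniformly in $z$. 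For (\ref{eqtn:WVEstimatefj}) one differentiates termwise to obtain $f^{(j)}(z)=\sum_n n(n-1)\cdots(n-j+1)a_nz^{n-j}$, observes that on the bulk $n(n-1)\cdots(n-j+1)=N(r)^j(1+O(j^2N(r)^{\beta-1}))$, and reruns the same bulk/tail split; the extra factor $(z/\zeta_r)^{-j}=1+o(1)$ that appears is absorbed into $\epsilon_j$.

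The main obstacle is the concentration lemma itself: controlling $N(r)$ via the logarithmic derivative of $\log\mu$ is clean only on the complement of a sparse set of radii, and producing the finite-logarithmic-measure exceptional set $E$ requires a careful Borel-type lemma on monotone/convex functions applied to $\log\mu(e^t)$. Once this quantitative control is in hand, Steps 2 and 3 are uniform error bookkeeping inside the shrinking disk $|z-\zeta_r|<r/N(r)^\alpha$, and the largeness of the exponent gap $\alpha-\beta>0$ ensures that both the bulk approximation $(1+w)^{n-N(r)}\approx 1$ and the tail bound work simultaneously.
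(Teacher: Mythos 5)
The paper does not prove Theorem~\ref{thm:WV}; it is stated as a citation to \cite{Eremenko89} and \cite[Theorem 2.2.20]{Langley07}, so there is no internal proof to compare against. Your outline does capture the standard architecture of the Wiman--Valiron argument: concentration of the Taylor coefficients near the central index, the substitution $z=\zeta_r(1+w)$ to peel off the monomial $(z/\zeta_r)^{N(r)}$, and a bulk/tail split with the exceptional set coming from a Borel-type lemma applied to the convex function $\log\mu(e^t)$.

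That said, there is a concrete error and a nontrivial gap in the bookkeeping. The claim ``$M(r)=\mu(r)(1+o(1))$'' is false: for $f(z)=e^z$ one has $\mu(r)=r^{N}/N!\sim e^r/\sqrt{2\pi N}$ with $N=N(r)\sim r$, so $M(r)/\mu(r)\sim\sqrt{2\pi N(r)}\to\infty$. The correct relation, due to Wiman, is only $\mu(r)\le M(r)\le\mu(r)\bigl(\log\mu(r)\bigr)^{1/2+\epsilon}$ off a set of finite logarithmic measure, i.e.\ $\log M\sim\log\mu$. This matters for your bulk estimate: from ``$(1+w)^{n-N(r)}=1+o(1)$ uniformly on the bulk'' you want to conclude the bulk sum is $(1+o(1))\sum_{\mathrm{bulk}}a_n\zeta_r^n$, but the absolute error is bounded only by $\sup_n|\epsilon_n|\cdot\sum_{\mathrm{bulk}}|a_n|r^n$, and the quantity $\sum_n|a_n|r^n$ can exceed $M(r)=|f(\zeta_r)|$ by a power of $N(r)$. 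So you must feed the Gaussian decay $|a_n|r^n\lesssim\mu(r)\exp(-c(n-N)^2/N)$ back into the bulk sum (not just the tail), use $|\epsilon_n|\lesssim|n-N|\,|w|$, and compare the resulting error $O(\mu(r)N(r)^{1-\alpha})$ to $M(r)$ via the Wiman inequality above. As written, your argument does not obviously close for all $\alpha\in(\tfrac12,1)$; making it close is precisely the delicate part of Wiman--Valiron theory that the concentration lemma, with quantitative control of the second log-derivative of $\mu$, is designed to handle. I would also flag that the exponent gap in the derivative estimate should read $n(n-1)\cdots(n-j+1)=N^j\bigl(1+O(jN^{\beta-1})+O(j^2/N)\bigr)$ rather than $N^j(1+O(j^2N^{\beta-1}))$, though this is minor. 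None of these points mean the approach is wrong --- it is the standard one --- but a complete proof would need the quantitative concentration lemma with its Gaussian constants made explicit, and a comparison of $\sum|a_n|r^n$ with $M(r)$ rather than with $\mu(r)$.
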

The disk $\left\{|z-\zeta_r|< \frac{r}{(N(r))^\alpha}\right\}$ is called a \emph{Wiman-Valiron disk}.
Equation~(\ref{eqtn:WVEstimatef}) can be rewritten as

\begin{equation}\label{eqtn:WVEstimateflog}
\log f(z)-\log f(\zeta_r) ={N(r)}\left(\log{z}-\log \zeta_r\right)+\log(1+\epsilon_0).
\end{equation}

\begin{rem}\label{univlog}
It follows by (\ref{eqtn:WVEstimatefj}) for $j=1$ that
$$(\log f)'(z)=\frac{N(r)}{\zeta_r}(1+\epsilon_1).$$ If $r$ is large enough so that $|\epsilon_1|<1$, this implies that $\log f$ is univalent on the Wiman-Valiron disk.
\end{rem}

\subsection{Existence of periodic orbits}

In this subsection we prove Proposition \ref{prop:one}, stating that every transcendental H\'enon map admits a periodic orbit.

%\begin{theorem}\label{theoremperiodic}
%Any transcendental H\'enon map admits a periodic point.
%\end{theorem}

By \cite[Proposition 3.3]{henon1}, a transcendental H\'enon map $F$ admits periodic points of order $1$ or $2$ unless it is of the form
$$
F: (z,w) \mapsto (e^{g(z)} + w, z),
$$
where $g\colon \C\to \C$ is an entire holomorphic function. Hence
in order to prove Proposition \ref{prop:one} it is enough to prove the following proposition.
\begin{prop} Let $F(z,w)= (e^{g(z)} + w, z)$. Then $F$ has infinitely many periodic points of order $4$.
\end{prop}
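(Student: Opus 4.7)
The plan is to translate the period-$4$ condition into a system of two complex equations in two unknowns and then solve it with a combination of Wiman-Valiron theory, Picard's theorem, and a perturbation/implicit-function argument.

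\smallskip
\emph{Unfolding.} A period-$4$ orbit of $F$ corresponds to four complex numbers $(z_0,z_1,z_2,z_3)$, with $w_0=z_3$, satisfying $z_{n+1}=e^{g(z_n)}+z_{n-1}$ cyclically. Setting $A:=e^{g(z_0)}$ and $B:=e^{g(z_1)}$ one has $z_2=z_0+B$ and $z_3=z_1-A$, while the closure conditions $z_4=z_0$ and $z_5=z_1$ become
\begin{align*}
(\mathrm{E}_1)\quad & g(z_0+e^{g(z_1)})-g(z_0)\in \pi i(2\mathbb{Z}+1),\\
(\mathrm{E}_2)\quad & g(z_1-e^{g(z_0)})-g(z_1)\in \pi i(2\mathbb{Z}+1).
\end{align*}
Since $e^{g(\cdot)}$ never vanishes, $F$ has no periodic points of period $1$ or $2$, so any solution of $(\mathrm{E}_1)$-$(\mathrm{E}_2)$ automatically has prime period~$4$.

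\smallskip
\emph{Solving $(\mathrm{E}_1)$ via Wiman-Valiron.} Apply Theorem~\ref{thm:WV} to the entire transcendental function $f=e^g$. For $r$ large outside the exceptional set, write $\zeta=\zeta_r$, $N=N(r)$, and $D_r=D(\zeta,r/N^\alpha)$. By Remark~\ref{univlog}, $g$ is univalent on $D_r$ with $g'\approx N/\zeta$; moreover $g(D_r)$ contains a disk of radius $\sim N^{1-\alpha}\to\infty$ centered at $g(\zeta)$. Consequently, on an appropriate sub-domain of $D_r$ the inverse of $g|_{D_r}$ yields, for each $z_0$, a unique $z_2=z_0+B(z_0)\in D_r$ with $g(z_2)-g(z_0)=\pi i$; the resulting map $z_0\mapsto B(z_0)$ is holomorphic and satisfies $B(z_0)\approx \pi i\,\zeta/N$.

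\smallskip
\emph{Solving $(\mathrm{E}_2)$ via Picard and coupling.} For $z_0$ in the above sub-domain, set $A=f(z_0)$, of modulus $\approx M(r)$. The entire non-vanishing function
\[
H_{z_0}(z):=\frac{f(z-A)}{f(z)}=e^{g(z-A)-g(z)}
\]
is transcendental and non-constant except when $g'$ is $A$-periodic, a situation that forces $g$ to be essentially affine and can be handled by direct construction. By Picard's theorem $H_{z_0}$ therefore attains $-1$ infinitely often, each zero of $H_{z_0}+1$ giving a $z_1$ satisfying $(\mathrm{E}_2)$. A simultaneous solution of $(\mathrm{E}_1)$ and $(\mathrm{E}_2)$ is a $z_1$ with both $H_{z_0}(z_1)=-1$ and $f(z_1)=B(z_0)$. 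Treating $z_0$ and a branch $z_1$ as two independent holomorphic unknowns, one linearises the pair of equations at a carefully chosen approximate solution $(z_0^\star,z_1^\star)$ built from $\zeta$ and a preimage of $B(\zeta)$ under $f$; the Jacobian is non-degenerate because the first derivative $g'$ is enormous on $D_r$ while $g'$ at $z_1^\star$ is only moderate and (generically) nonzero, so the implicit function theorem produces an exact period-$4$ point nearby. Varying $r$ and the branches of the logarithm in $(\mathrm{E}_1)$-$(\mathrm{E}_2)$ then yields infinitely many distinct such orbits.

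\smallskip
\emph{Main obstacle.} The technical heart lies in the construction of the approximate simultaneous solution: one must show that among the infinitely many $z_1$ with $H_{z_0}(z_1)=-1$ there exist some whose $f$-value is close to $B(z_0)$, for a suitable $z_0$. Verifying this compatibility between two different discrete zero sets in $\mathbb{C}$, both produced by value-distribution arguments, is the delicate part; it is most likely achieved either by a second Wiman-Valiron analysis applied at the scale of the small value $B_\star\approx \pi i\,\zeta/N$ to locate the relevant preimages of $\pm B_\star$, or by a Rouch\'e/degree computation in two complex variables that bypasses the need to exhibit the approximate solution explicitly.
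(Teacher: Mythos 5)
Your unfolding of the period--$4$ condition into the system $(\mathrm{E}_1)$--$(\mathrm{E}_2)$ agrees with the paper, and so does the observation (via \cite[Proposition 3.3]{henon1}) that any solution has prime period $4$. But the solution strategy diverges sharply and, as you yourself flag, leaves a gap that is not a technical detail but a structural obstruction. The root of the problem is your choice to apply Wiman--Valiron theory to $f=e^g$ rather than to $g$. On the Wiman--Valiron disk $D_r$ of $f$ one has $|f(z_0)|\asymp M_f(r)$, so $A=f(z_0)$ is enormous. Your equation $(\mathrm{E}_2)$ then compares $g$ at $z_1$ and at $z_1-A$, two points separated by a huge distance, and there is no estimate — Wiman--Valiron or otherwise — that controls $g(z_1-A)-g(z_1)$ across such a gap. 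Picard's theorem tells you $H_{z_0}$ hits $-1$ somewhere, but gives no localization, so there is no mechanism forcing one of those preimages to also satisfy $f(z_1)=B(z_0)$ with $|B(z_0)|\asymp r/N_f(r)\to 0$. An approximate solution $(z_0^\star,z_1^\star)$ on which to run an implicit-function or degree argument is exactly what is missing, and I do not see how the asymmetric setup could produce one.

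The paper sidesteps all of this by working on the diagonal $\{z=w\}$ and applying Wiman--Valiron to $g$ itself. There, $|g(z_0)|\asymp M_g(r)$ but one is free to choose $\mathrm{Re}\,g(z_0)$ so that $|e^{g(z_0)}|$ is \emph{small} (of order $r/(N_g M_g)$), and the first-order approximation of \emph{both} equations collapses to the single scalar equation $g'(z)\,e^{g(z)}=\pi i$, which is solved via a topological/Rouch\'e argument inside a rectangle of $\log g$-values. Because $e^{g(z_0)}$ is tiny, the full two-variable map $G(z,w)=\bigl(g(w+e^{g(z)})-g(w),\,g(z-e^{g(w)})-g(z)\bigr)$ is within $O(1/M)$ of $(\pi i,-\pi i)$ at $(z_0,z_0)$, is univalent on a polydisk $B\times B$ by a cone estimate on $DG$, and covers a ball of radius $\asymp 1$, so the exact solution falls out. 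The lesson is that the symmetry of the diagonal and the choice of $g$ (not $e^g$) for the Wiman--Valiron analysis are what make the perturbation small and the proof close; your version keeps $A$ large and the two unknowns at incompatible scales, which is why the coupling you identified as the ``main obstacle'' cannot be dispatched by the tools you list.
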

 Recall that the inverse of a Transcendental H\'enon map
 $$F(z,w)= (f(z)-\delta w,z)$$
 is given by
$$
F^{-1}(z,w)= (w,\frac{ f(w)-z}{\delta}).$$
\begin{proof}
 We consider the case in which    $g$ is transcendental. If $g$ is polynomial, the proof is similar but  simpler.

%We now show that if $F$ is of this form,  then it admits  periodic points of order $4$.
  Since $F$ is a homeomorphism, a periodic point  $(z,w)$ of order $4$ has to satisfy the equation $F^2(z,w)=F^{-2}(z,w)$, which by the special form of the H\'enon map reduces to the system
$$
\begin{cases}
e^{g(w+e^{g(z)})} = - e^{g(w)} \\
e^{g(-e^{g(w)}+z)} = - e^{g(z)}.
\end{cases}
$$
In particular any solution to the system
\begin{equation}\label{principale}
\begin{cases}
g(w + e^{g(z)}) - g(w) = \pi i\\
g(z-e^{g(w)}) - g(z) = -\pi i.
\end{cases}
\end{equation}
gives a periodic point of order $4$.

%We will first   look for a first order approximation of these two equations   on the diagonal $\{z=w\}$. Note that when $e^{g(z)}$ is small, both above equations can be reduced to the first order approximation

We   first   look at a first order approximation of these two equations  in the term $e^{g(z)}$, and look for a solution  on the diagonal $\{z=w\}$. With these simplifications both of the equations above reduce to the equation

\begin{equation}
\label{eqtn:periodicapprox}
g^\prime(z)  e^{g(z)} = \pi i.
\end{equation}
We now look for a solution of (\ref{eqtn:periodicapprox}). Let $\alpha=2/3$, $q=2$. Let $\epsilon_j$ and $E$ be given by Theorem \ref{thm:WV} for the function $g$.  %By \cite{Langley07}[Lemma 2.2.8] there exists an   exceptional set $E'$  of finite logarithmic measure such that for all $r\not\in E'$,
  By (\ref{eqtn:M vs N}) there is a set  $E'$  of finite logarithmic measure such that for all $r\not\in E'$
\begin{equation}\label{langleyestimate}
N\leq (\log M)^2.
\end{equation}

Let $r>0$ be such that $r\notin E\cup E'$ and such that
$$|\log(1+\epsilon_0)|<1,\quad |\epsilon_1|<1.$$
Let $\zeta_r$ be a point of maximum modulus for $r$, let $N=N(r)$ and consider a domain  $D$ of the form
$$
D := \{z : re^{-\frac{2}{N}} < |z| < re^{\frac{2}{N}}\; , \; \; |\mathrm{Arg}(z)-\mathrm{Arg}(\zeta_r)|< 4\pi/N\}.
$$
Observe that $D$ is contained in the Wiman-Valiron disk given by (\ref{Size of WV disks}).
The function $$h(z):= N(\log z-\log \zeta_r)+\log g(\zeta_r)$$ maps $D$ univalently onto a rectangle centered at $\log g(\zeta_r)$ of width 4 and height $4\pi$. By Remark \ref{univlog}  the map $\log g$ is univalent on $D$ since $|\epsilon_1|<1$.
Note that (\ref{eqtn:WVEstimateflog}) gives
\[
\log g=h+\log(1+\epsilon_0).
\]
Since $|\log(1+\epsilon_0)|<1$, the image  $\log g(D)$ contains a rectangle  centered at $\log g(\zeta_r)$ of width $2$ and height $3\pi$.
It follows that $g(D)$ contains the annulus
$$
A_r= \{M/e < |z| < M e\}.
$$
By (\ref{langleyestimate})
$$\frac{\log\frac{r}{NM}}{M}\to 0,$$
as $r\to \infty$.
Hence  in the image $g(D)$ we can find a closed rectangle $R$ of width $6$ and height $4\pi$ centered at a point $u_0 +i v_0$ with
\begin{equation*}
u_0 = \log\frac{\pi r}{NM} ,
\end{equation*}
with an inverse branch $g^{-1}\colon R\to g^{-1}(R)$ defined in a neighborhood of $R$. We can also assume that a neighborhood of $R$ or radius $\pi/4$ is still contained in $g(D)$.

Let $\gamma\colon \mathbb{S}^1\to \C$ be a positively oriented parametrization of $\partial g^{-1}(R)$.
Let $\gamma_{\rm left},\gamma_{\rm right}$ be the portions of $\gamma$ which are mapped by $g$ to the  left and to the right vertical
 sides of  $\partial R$ respectively.
Notice that $z$ solves the equation $g^\prime(z)  e^{g(z)} = \pi i$ if  it solves the equation
\begin{equation}\label{logs}
\log g'(z)+g(z)=\log(\pi i )+2k\pi i =\log \pi+i\frac{\pi}{2}+2k\pi i
\end{equation} for some $k\in\Z$. So now we will estimate the function $\log g'(z)+g(z)$ on $g^{-1}(R)$.

 By Theorem~\ref{thm:WV},
\[|g'(z)|= \left|\frac{z}{\zeta_r}\right|^{N} \frac{NM}{r}  |(1+\epsilon_0)(1+\epsilon_1)|.\]
  Then for every $z\in\gamma_{\rm left} $ we have that
 $${\rm Re}[ g(z)+\log g'(z)]=\log \frac{\pi r}{NM}-3+\log |g'(z)|\leq \log \pi-1+\log |(1+\epsilon_0)(1+\epsilon_1)|\leq\log \pi-\frac{1}{2} ,$$
 where we used that $|z|< re^{\frac{2}{N}}$ and assumed $r$ big enough.
Similarly, for every $z\in \gamma_{\rm right}$ we get that
$${\rm Re} [g(z)+\log g'(z)]\geq\log \pi +\frac{1}{2}.$$

 We now estimate $\Im[g(z)+\log g'(z)]$ on $g^{-1}(R)$.
By (\ref{eqtn:WVEstimatefj}) we have that ${\rm Arg}\, g'$ is close to a constant on $g^{-1}(R)$, and thus that  ${\rm Im} \log g'$ is close to  constant, say $c$.
Hence, since $\Im g(z)$ ranges between $v_0-2\pi$ and $ v_0+2\pi,$  the image of $g^{-1}(R)$ via the map $g(z)+\log g'(z)$ contains a rectangle of width 1 and height  $3\pi$ centered at   the point $\log \pi+i(v_0+c)$.
It follows that    there exists $z_0\in g^{-1}(R)$
satisfying (\ref{logs}) for some $k\in\Z$ and hence satisfying (\ref{eqtn:periodicapprox}).

 \begin{figure}[hbt!]
\begin{center}
\def\svgwidth{0.7\textwidth}
\begingroup%
  \makeatletter%
    \setlength{\unitlength}{\svgwidth}%
  \makeatother%
  \begin{picture} (1,0.77648714)%
    \put(0,0){\includegraphics[width=\unitlength]{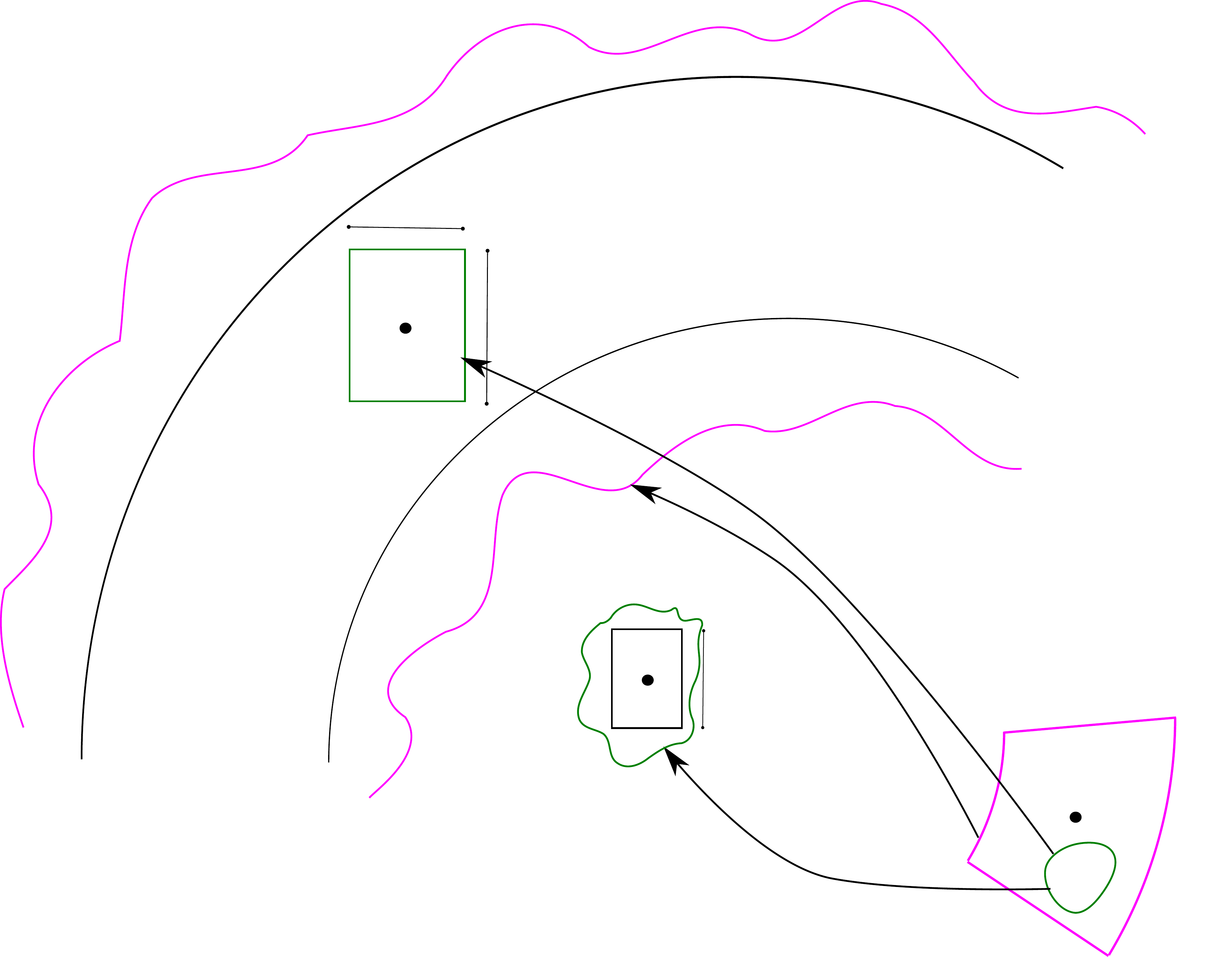}}%
    \put(0.93547219,0.0083928){\color[rgb]{0,0,0}\makebox(0,0)[lb]{\small{$D$}}}%
    \put(0.28177084,0.40398144){\color[rgb]{0,0,0}\makebox(0,0)[lb]{\small{$R$}}}%
    \put(0.88739487,0.11008297){\color[rgb]{0,0,0}\makebox(0,0)[lb]{\small{$\zeta_r$}}}%
    \put(0.31613106,0.60240328){\color[rgb]{0,0,0}\makebox(0,0)[lb]{\small{$6$}}}%
    \put(0.40655724,0.52202446){\color[rgb]{0,0,0}\makebox(0,0)[lb]{\small{$4\pi$}}}%
    \put(0.31038972,0.52202446){\color[rgb]{0,0,0}\makebox(0,0)[lb]{\small{$u_0+i v_0$}}}%
    \put(0.58291346,0.21098653){\color[rgb]{0,0,0}\makebox(0,0)[lb]{\small{$3\pi$}}}%
    \put(0.69959048,0.29267669){\color[rgb]{0,0,0}\makebox(0,0)[lb]{\small{$g$}}}%
    \put(0.62367779,0.0296613){\color[rgb]{0,0,0}\makebox(0,0)[lb]{\small{$\log g'(z)+g(z)$}}}%
    \put(0.51061912,0.23423955){\color[rgb]{0,0,0}\makebox(0,0)[lb]{\small{$\log\pi+i(v_0+c)$}}}%
  \end{picture}%
\endgroup%

 %%%%%%%%%%%%%%%%%%%55
\end{center}
\caption{\small Finding a solution $z_0$ to the first order approximation of equation~\ref{eqtn:periodicapprox}, not drawn to scale.}
\label{}
\end{figure}

%
%\begin{figure}[t]
%\centering
%\includegraphics[width=2.5in]{Period4.png}
%\caption{Finding the first order approximation $z_0$.}
%\label{figure:Period4}
%\end{figure}
%
%\medskip

Let $h_1, h_2$ be two real functions. We say that $h_1\simeq h_2$  as $x\to x_0$  if $h_1=O(h_2)$ and $h_2=O(h_1)$.
By our choice of the rectangle $R$ the disc $\D(g(z_0),\pi/4)$ of radius $\pi/4$ centered in $g(z_0)$ is also contained in $g(D)$ and admits an inverse branch of $g$.
If $z\in g^{-1}(\D(g(z_0),\pi/4))$ we have  that
\begin{equation}\label{orderestimate}
 |e^{g(z)}|\simeq \frac{r}{NM}.
\end{equation}
Since $(g^{-1})'(g(z_0))\simeq\frac{r}{NM}$, by the Koebe $\frac{1}{4}$-theorem $g^{-1}(\D(g(z_0),\pi/4))$ contains a disk $B$ centered at $z_0$ of radius $ \simeq \frac{r}{NM}$.

Our goal is now to find a solution to the original system (\ref{principale}) close to the point $(z_0, z_0)$.
Consider the map
\begin{equation}\label{eqtn:mapG}
G(z,w): = (g(w +e^{g(z)}) - g(w), g(z-e^{g(w)}) - g(z))
\end{equation}
  and let us show that there exists $(z,w)$ such that $G(z,w)=(\pi i , -\pi i)$.
Let $G_1$ and $G_2$ denote the components of $G$.

Using Taylor expansion with Lagrange remainder for $g$ near $z_0$ and the fact that $z_0$ satisfies (\ref{eqtn:periodicapprox}) gives
$$
|G_1(z_0,z_0)-\pi i|\leq \frac{\sqrt 2}{2}\max_{t\in [0,1]}|g''(z_0+te^{g(z_0)})||e^{2g(z_0)}|,
$$
and a similar estimate holds for $G_2$.  By Theorem~\ref{thm:WV} on $D$ we have
$$|g''(z)|=\frac{N^2}{r^2}\left(\frac{|z|}{r}\right)^NM|(1+\epsilon_0)(1+\epsilon_2)|\simeq \frac{N^2M}{r^2} .$$
 By (\ref{orderestimate}) it follows that
\begin{equation}\label{almostsolution}
\|G(z_0,z_0)-(\pi i,-\pi i)\|\simeq \frac{1}{M}.
\end{equation}

The differential $DG(z,w)$ equals
$$\left(
  \begin{array}{cc}
    g^\prime(w + e^{g(z)})e^{g(z)}g^\prime(z) & g^\prime(w + e^{g(z)}) - g^\prime(w) \\
    g^\prime(z - e^{g(w)}) - g^\prime(z) &  -g^\prime(z-e^{g(w)})e^{g(w)}g^\prime(w) \\
  \end{array}
\right)\simeq
\left(
  \begin{array}{cc}
 \frac{NM}{r} & \frac{N}{r} \\
\frac{N}{r}&  \frac{NM}{r} \\
  \end{array}
\right)
.
$$
We claim that for $r$ large enough the map $G$ is univalent on the polydisk $B\times B$.
Since for $z\in B$ the point $g(z)$ varies in a ball of radius $\pi/4$, it follows that $e^{g(z)}$ varies in a sector of angle $\frac{\pi}{2}$ of the complex plane. Hence for a large enough $r$,  the diagonal entries of the  differential $DG(z_1,z_2)$ are contained in two sectors  of angle $\frac{2\pi}{3}$ of the complex plane  for all $z= (z_1,z_2)\in B\times B$. Let $z\neq w$ be distinct points in $B\times B$, and assume first that $w_1-z_1 \neq 0$.
If $\gamma(t)=z+t(w-z)$ is the segment joining $z$ and $w$, then since the diagonal entries of $DG$ go to infinity faster than the off-diagonal entries we have that for all $t\in [0,1]$, the first component of $DG(\gamma(t))(w-z)$ is contained in an open sector of angle strictly less than $\pi$, which implies that $G(z)\neq G(w)$. We argue similarly if $z_1 = w_1$ but $z_2 \neq w_2$.

Since  for all $v\in \C^2$ there exists  $C(r)\simeq  \frac{NM}{r} $ such that $$\|DG(z,w)v\|\geq C\|v\|, \quad \forall (z,w)\in B\times B,$$
it immediately follows that  $G(B\times B)$ contains a ball of radius $\simeq 1 $ centered at $G(z_0,z_0)$.
It follows from (\ref{almostsolution}) that $(\pi i,-\pi i)$ is in the image of $B\times B$ when $r$ is big enough, which completes the proof.

%Notice that it actually follows that there exists infinitely many periodic points of $g$ of order $4$.

\end{proof}
Notice that the freedom in the choice of $r_n$ and $D_0$ allows the construction of infinitely many periodic orbits.

\subsection{Existence of escaping points}

The proof of the following result, implying Proposition \ref{prop:two}, is inspired by Eremenko's proof  \cite{Eremenko89} of the fact that a transcendental function $f\colon \C\to \C$ admits an escaping point.

\begin{theorem}\label{eremenkotheorem}
Any transcendental H\'enon map admits   infinitely many escaping orbits, converging  to the point $[1:0:0]\in \ell^\infty$.
\end{theorem}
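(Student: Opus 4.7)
The plan is to adapt Eremenko's one-variable construction to the Hénon setting. Recall that in one variable, Eremenko uses Wiman--Valiron theory to build, for each admissible radius $r$ with corresponding point of maximum modulus $\zeta_r$, a ``sectoral'' region $C_r \subset D$ (inside the Wiman--Valiron disk) such that $f(C_r)$ is an annulus of the form $\{M(r)e^{-4}<|z|<M(r)e^{4}\}$. Because $M(r)$ grows faster than any polynomial by \eqref{eqtn:WVlogMr}, such an annulus, at scale $\rho:=M(r)$, contains a new point of maximum modulus $\zeta_\rho$ for $f$ at that radius, provided $\rho$ is not in the exceptional set $E \cup E_\epsilon$. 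Compact nesting of preimages then produces an orbit escaping to infinity.

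For the Hénon map $F(z,w)=(f(z)-\delta w, z)$, I would choose a sequence of radii $r_1 < r_2 < \cdots$ avoiding the exceptional set, with $r_{k+1}$ lying in the annulus $\{M(r_k)e^{-4}<|z|<M(r_k)e^{4}\}$, and build a nested sequence of bidisks-like regions $V_1 \supset F^{-1}(V_2) \supset F^{-2}(V_3) \supset \cdots$ in $\C^2$. Specifically, pick $V_k = C_{r_k} \times C_{r_{k-1}}$ (where the second factor is fixed at the previous scale). A point $(z_0,w_0)\in \bigcap_n F^{-n}(V_n)$ then has orbit $(z_n,w_n)$ with $z_n \in C_{r_n}$ and $w_n = z_{n-1} \in C_{r_{n-1}}$. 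The required compatibility is that $f(C_{r_n}) - \delta\, C_{r_{n-1}}$ meets $C_{r_{n+1}}$; since $|f(z_n)| \asymp M(r_n)$ is a tower compared to $|\delta z_{n-1}|\leq |\delta|\, r_{n-1} e^4$, the translation by $-\delta w_n$ is a negligible perturbation and one may simply recenter the annulus $A_n$ around $f(z_n)-\delta w_n$ instead of $f(z_n)$ and use the Wiman--Valiron estimates at scale $r_{n+1}$ to re-find a sectoral preimage inside the shifted annulus.

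Once an escaping orbit $(z_n,w_n)$ is produced, convergence to $[1:0:0] \in \ell^\infty$ follows immediately from the super-exponential growth: since $|z_n|\geq M(r_n)e^{-4}$ and $|w_n|=|z_{n-1}|\leq r_{n-1} e^4$, and $M(r_n)/r_n \to \infty$ (indeed faster than any polynomial by \eqref{eqtn:WVlogMr}), we get $|w_n|/|z_n|\to 0$ while $|z_n|\to\infty$. To obtain \emph{infinitely many} such orbits, I would vary the initial radius $r_1$, or equivalently at each stage $k$ exploit the freedom of which preimage sectoral region inside the annulus $A_{k-1}$ to choose (there is combinatorial freedom, since $A_{k-1}$ contains many WV disks at scale $r_k$ as $r_k$ ranges over a positive-logarithmic-measure set), and then use a diagonal/Cantor-set argument to produce uncountably many distinct escaping orbits.

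The main technical obstacle I anticipate is the bookkeeping for the perturbation by $-\delta w_n$: one must ensure that after shifting the center of the target annulus $A_n$ by $-\delta w_n$, there still exists a new point of maximum modulus for $f$ at some radius $r_{n+1}\notin E\cup E'$ lying inside this shifted annulus, and that the next WV disk around it fits inside the shifted annulus with enough room for the cross-cut $C_{r_{n+1}}$. This requires controlling how the exceptional set $E \cup E_\epsilon$ intersects the annulus, which in Eremenko's original argument is handled by the fact that the annulus has multiplicative width $e^{8}$ and the exceptional set has finite logarithmic measure; the same argument applies here since $|\delta w_n|/|f(z_n)|\to 0$, so the shifted annulus still has multiplicative width arbitrarily close to $e^{8}$ for large $n$.
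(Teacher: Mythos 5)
Your overall plan is in the right spirit — adapt Eremenko's nested-preimage construction, use Wiman--Valiron domains at a tower of radii, and treat $-\delta w_n$ as a small perturbation — and the conclusion about convergence to $[1:0:0]$ is correct. But there is a genuine gap at the heart of the argument, precisely at the place you flag as the ``main technical obstacle,'' and the gap is not resolved by the observation that $|\delta w_n|/|f(z_n)|\to 0$.

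The problem is that the recurrence $z_{n+1}=f(z_n)-\delta w_n$ is \emph{second order}: $w_n=z_{n-1}$, so the perturbation is not an externally prescribed small shift that you can ``recenter the annulus'' around, but a quantity determined by the very orbit you are trying to build. Your bidisks $V_k=C_{r_k}\times C_{r_{k-1}}$ record the coupling $w_n=z_{n-1}$ only as a membership constraint, and you then assert that ``compact nesting of preimages produces an escaping orbit.'' This is the step that actually needs proof: one must show that $\bigcap_{k\le n}F^{-k}(V_k)$ is nonempty and compactly nested for every $n$, which requires an overflow/covering condition for the composed map $z_0\mapsto z_n$ where each increment carries its own $-\delta z_{k-1}$ term. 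Simply having $f(C_{r_n})-\delta C_{r_{n-1}}$ \emph{meet} $C_{r_{n+1}}$ is necessary but far from sufficient.

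The paper resolves this by replacing the two-dimensional product picture with explicit one-dimensional curves $\Gamma_n=\{(z,\varphi_n(z)) : z\in D_n\}$, where the maps $\varphi_n$ are defined \emph{recursively}: $\varphi_1$ is an inverse branch of $f$, but for $n>1$ the map $\varphi_n$ is an inverse branch of $f-\delta\varphi_{n-1}$, not of $f$ alone. This recursion is the key new ingredient compared with the one-variable case: it encodes exactly the coupling $w_n=\varphi_n(z_n)=z_{n-1}$, so that $F^{-1}(\Gamma_n)\subset\subset\Gamma_{n-1}$ holds identically, and the escaping point is obtained as a decreasing intersection of nonempty compacts inside the fixed curve $\Gamma_0$. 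The overflow is then a scalar statement, $(f-\delta\varphi_n)(D_n)\supset A_{n+1}$, proved by estimating $\bigl(\log(f-\delta\varphi_n)\bigr)'$ and showing the perturbation $\delta\varphi_n'/f$ is negligible relative to $N_n/\zeta_n$. Your proposal would need an analogue of this overflow estimate for the second-order pullbacks, together with a degree or univalence argument, before ``compact nesting'' can be invoked; as written that step is assumed rather than established.
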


\begin{proof}
Let $F(z,w)=(f(z)-\delta w,z)$.
Let $\alpha=2/3$ and $q=1$. Let $\epsilon_0,\epsilon_1$ and $E$ be as in Theorem \ref{thm:WV} for the function $f$.
Finally, let $R>0$ be sufficiently large such that the following six properties are satisfied:
\begin{itemize}
\item[(i)]  $ |\log(1+\epsilon_0)|<\frac{1}{2}$ and $|\epsilon_0|\leq \frac{1}{2}$ for all $r\geq R$,
\item[(ii)] $ |\epsilon_1|<\frac{1}{2}$ for all $r\geq R$,
\item[(iii)]  for all $r\geq R$ we have that
$\frac{M(r)}{r}$ is larger than an arbitrarily large constant, to be determined later in the proof, %$> (|\delta|+1)\frac{8}{3}e^2$,
%and moreover $\frac{M(r)}{r}$ is large enough such that for all $|s|\leq \frac{|\delta| r}{eM(r)}$,
%$$|\log(1+ s)|<\frac{1}{2},\quad |{\rm Arg}\frac{1}{1+ s}|< \frac{\pi}{4},$$
\item[(iv)] the logarithmic measure of $E\cap [R,\infty)$ is $\leq 1$,
\item [(v)]  $N(R)>8$  (recall that $N(r)$ is increasing),
\item[(vi)] for all $r\geq R$  and for every point $\zeta_r$ with $|\zeta_r| = r$ the domain
$$D := \{z : re^{-\frac{2}{N(r)}} < |z| < re^{\frac{2}{N(r)}}\; , \; \; |\mathrm{Arg}(z)-\mathrm{Arg}(\zeta_r)|< 4\pi/N(r)\}$$
is contained in the disk $\{|z-\zeta_r|< \frac{r}{N(r)^{2/3}}\}$.
\end{itemize}

We construct inductively an increasing sequence of radii $r_n\geq R$, $r_n\not \in E$, $r_n\to \infty$,
 (denote $M_n$ and $N_n$ the maximum modulus and central index of $r_n$) with  points of maximum modulus $\zeta_n$,  a sequence of domains $(D_n)_{n\geq 0}$ defined as
  $$D_n := \{z : r_ne^{-\frac{2}{N_n}} < |z| < r_ne^{\frac{2}{N_n}}\; , \; \; |\mathrm{Arg}(z)-\mathrm{Arg}(\zeta_n)|< 4\pi/N_n\},$$
such that
 $$D_n\subset A_n:= \{M_{n-1}/e < |z| < M_{n-1} e\},\quad \forall n\geq 1,$$
and a sequence of univalent maps $(\varphi_n\colon D_{n}\to D_{n-1})_{n\geq 1}$ satisfying
 \begin{itemize}
 \item[(a)] $f(D_0)\supset A_1$ and  $(f-\delta \varphi_{n})(D_{n})\supset A_{n+1}$ for all $n\geq 1$,
\item[(b)] $\varphi_{1}\colon D_1\to D_0$ is an  inverse branch of $f$ and $\varphi_{n}$ is an  inverse branch of $f-\delta \varphi_{n-1}$ for all $n>1$,
\item[(c)] $|\varphi_n'(w)|\leq 1$ for all $ w\in D_n$.
\end{itemize}

Assuming that this is done, let us first show how these constructions imply the existence of an escaping point  for $F$. Consider for $n\geq 0 $ the embedded complex submanifold defined as the graph
$$
\Gamma_n:=\{(z, \varphi_n(z)),\, z\in  D_n\},
$$
so that for all $n\geq 1$ we have $F^{-1}(\Gamma_n)\subset \subset \Gamma_{n-1}$.
Indeed,
$$
F^{-1}(z,\varphi_n(z))=(\varphi_n(z),\varphi_{n-1}(\varphi_n(z)))\in \Gamma_{n-1}\cap \pi_z^{-1}(\varphi_n(D_n)).
$$

If  for all $n> 1$ we define the set
$$
K_n:={F^{-n}(\Gamma_n)}\subset \Gamma_0,
$$
then $K_n\subset \subset K_{n+1}$ for all $n$, and moreover the intersection $\bigcap_{n\in \N} K_n$ is nonempty and  consists of escaping points.

We now construct the radii $r_n$, the points $\zeta_n$, domains $D_n$ and the maps $\varphi_n$ recursively, starting with $r_0 \ge R$ outside of the exceptional set $E$.  Let  $\zeta_{0}$ be a point of maximum modulus for $r_0$ and consider the domain
$$
D_0 := \{z : r_0e^{-\frac{2}{N_0}} < |z| < r_0e^{\frac{2}{N_0}}\; , \; \; |\mathrm{Arg}(z)-\mathrm{Arg}(\zeta_0)|< 4\pi/N_0\}.
$$
By (vi) $D_0$ is contained in the Wiman-Valiron disk centered in $\zeta_0$.
As in the proof of Proposition \ref{prop:one} we obtain, using (i) and (ii) that $\log f$ is univalent on $D_0$ and that the image $\log f(D_0)$ contains a rectangle $Q$  centered at $\log f(\zeta_0)$ of width $2$ and height $3\pi$, and thus that $f(D_0)$ contains the annulus
$$
A_1:= \{M_0/e < |z| < M_0 e\}.
$$

The set $ \{M_0/e < r < M_0 e\}$ has logarithmic measure 2 and by (iii) it is contained in $  [R,\infty)$.   By (iv) $E\cap [R,\infty)$ does not contain any interval of logarithmic measure 1. By (v)  there exists $r_1\notin E$ such that
$$
D_1 := \{z : r_1e^{-\frac{2}{N_1}} < |z| < r_1e^{\frac{2}{N_1}}\; , \; \; |\mathrm{Arg}(z)-\mathrm{Arg}(\zeta_{1})|< 4\pi/N_1\}
$$
is compactly contained in the annulus $A_1$, where  $\zeta_{1}$ is a point of maximum modulus for $r_1$.
Notice that the image $f(D_0)$ winds around $A_1$ at least 1.5 times.  Since $4\pi/N_1<\pi/2$ by (v), it follows that there exists
 an inverse branch $\varphi_1\colon  D_1\to D_0$ of $f$.   Notice that
$|\varphi_{1}'|\leq 1$. Indeed  if  $z\in D_0$, then by (i) and (ii)    $|f'(z)|\geq \frac{e^{-2}N_0M_0}{4r_0}$, hence by (iii) $|f'(z)|$ can be assumed to be larger than $1$.

Suppose now that we have carried on the construction up to $n\geq 0$.
%We claim that the function $\log(f-\delta\varphi_{n})$ is univalent on $D_n$. To show that, compute the derivative
Observe that
$$
(\log(f-\delta\varphi_{n}))'=\left(\frac{f'}{f}-\frac{\delta\varphi'_{n}}{f}\right)\frac{f}{f -\delta\varphi_{n}}= \left(\frac{N_n}{\zeta_n}(1+\epsilon_1)-\frac{\delta\varphi'_{n}}{f}\right)\frac{f}{f -\delta\varphi_{n}}.
$$
Since  by (\ref{eqtn:WVEstimatef})
% we estimate the derivative with 1 and the modulus of f from below estimating the ratio between |z| and \zeta_r
%
 we have  $\left|   \frac{\delta\varphi'_{n}}{f} \right|\leq    \frac{|\delta|}{2^{-1}e^{-2}M_n}$ and therefore
$$
\frac{N_n}{r_n}/  \left|   \frac{\delta\varphi'_{n}}{f} \right|  \geq \frac{N_nM_n2^{-1}e^{-2}}{r_n|\delta|}.
$$
Combining this equation with (ii) and (iii) plus the observation $\left|\frac{\varphi_n}{f}\right|\leq \frac{r_n}{eM_n}$ gives that
$(\log(f-\delta\varphi_{n}))'$ takes values in a sector of angle strictly less than $\pi$, and thus that $\log(f-\delta\varphi_{n})$ is univalent on $D_n$.

We now show that the image $\log(f-\delta\varphi_{n})(D_n)$ contains a rectangle $Q$  centered at $\log f(\zeta_n)$ of width $2$ and height $3\pi$, and thus that the image $(f-\delta\varphi_{n})(D_n)$ contains the annulus
$$
A_{n+1}= \{M_n/e < |z| < M_n e\}.
$$
The function
$$
h(z):= N_n(\log z-\log \zeta_n)+\log f(\zeta_n)
$$
maps $D_n$ univalently onto a rectangle centered at $\log f(\zeta_n)$ of width 4 and height $4\pi$. Since by (\ref{eqtn:WVEstimateflog}) we have that $\log f=h+\log(1+\epsilon_0)$ it follows that
$$
\log(f-\delta\varphi_n)=h +\log(1+\epsilon_0)+\log (1- \frac{\delta\varphi_n}{f}),
$$
and by (i) and (iii) we have that  $|\log(1+\epsilon_0)+\log (1- \frac{\delta\varphi_n}{f})|< 1$.

The set $ \{M_1/e < r < M_1 e\}$ has logarithmic measure 2 and by (iii) it is contained in $ [R,\infty)$. By  (iv) the set $E\cap [R,\infty)$ does not contain any interval of logarithmic measure 1. By (v) there exists $r_{n+1}\notin E$ such that  the domain
$$
D_{n+1} := \{z : r_{n+1}e^{-\frac{2}{N_{n+1}}} < |z| < r_{n+1}e^{\frac{2}{N_{n+1}}}\; , \; \; |\mathrm{Arg}(z)-\mathrm{Arg}(\zeta_{n+1})|< 4\pi/N_{n+1}\}
$$
is compactly contained in the annulus $A_{n+1}$, where  $\zeta_{n+1}$ is a point of maximum modulus for $r_{n+1}$.
Let $\varphi_{n+1}\colon  D_{n+1}\to D_n$ be an inverse branch of $f-\delta \varphi_{n}$.
We only need to show that $|\varphi_{n+1}'|\leq 1$. Indeed, if $z\in D_{n+1}$ and $w:=\varphi_{n+1}(z)$,
\begin{equation}\label{eqtn:original estimates about phi}
|\varphi_{n+1}'(z)|\leq \frac{1}{|f'(w)|-|\delta\varphi'_{n}(w)|},
\end{equation}
and the claim follows since by (i), (ii) and (iii),   $|f'(w)|\geq \frac{e^{-2}N_nM_n}{4r_n}\geq 1.$

Notice finally that each of the infinitely many choices of the $D_n$ gives different orbits of escaping points.
 \end{proof}

For every subset $X\subset \C$, we denote by $\mathcal{N}_\delta(X)$ its $\delta$-neighborhood.
In the next section we will need better estimates of the previous construction.
It is easy to see that we can arrange the construction in such a way  that $\mathcal{N}_\delta(D_{n+1})\subset A_{n+1}$, and thus the inverse branch $\varphi_{n+1}$ of $f-\delta\varphi_n$ is defined on the neighborhood $\mathcal{N}_\delta(D_{n+1})$.
Moreover, up to  taking $n$ large enough, there exists a constant $C>1$, independent of $n$, such that
\begin{equation}\label{jonsnow}
 C^{-1}\frac{M_nN_n}{r_n}  \leq   |f'(z)-\delta\varphi_{n}'(z)|\leq C\frac{M_nN_n}{r_n},\quad \forall z\in D_n.
\end{equation}
and thus
\begin{equation}\label{sam}
C^{-1}\frac{r_n}{M_nN_n}  \leq   |\varphi'_{n+1}(w)|\leq C\frac{r_n}{M_nN_n},\quad \forall w\in \mathcal{N}_\delta(D_{n+1}).
\end{equation}

\begin{defn}
We refer to any escaping point constructed in this way as
an {\sl Eremenko escaping point}.
\end{defn}

\section{Curves of escaping points}

For complex (polynomial) H\'enon maps, the escaping set $U^+$ is foliated by embedded complex lines, a result of Hubbard and Oberste-Vorth \cite{HOV1}. One usually constructs these complex lines by considering the unique complex tangent directions of level curves $\{G^+ = c\}$, where $G^+$ is the forward Green's function defined by
$$
G^+(z) = \lim_{n \rightarrow \infty} \frac{\log^+\|F^n(z)\|}{\mathrm{deg}(F)^n}  .
$$
The fact that $G^+ : U^+ \rightarrow \mathbb R$ is a pluriharmonic submersion implies the existence of the leaves of the foliation.

A different point of view, with more potential for generalization to the transcendental setting, is to consider the leaves of the foliation as \emph{strong stable manifolds}. Let $(P_n)$ be a forward orbit contained in $U^+$. Up to renumbering the sequence $(P_j)$ we may assume that $|z_0|$ is large, and that $|z_0| >> |w_0|$. It follows that the polynomial $f(z)$ is strongly expanding near each $z_j$, and we may assume that image under $f$ of the disk $D(z_j, 1)$ covers $D(z_{j+1},1)$ univalently for each $j \ge 0$.

For the two-dimensional map $F(z,w) = (f(z) - \delta w, z)$ it follows that images $F( \Delta^2(P_j, 1))$ intersect the next bidisk $\Delta^2(P_{j+1},1)$, with underflow in the vertical direction and overflow in the horizontal, both with uniform estimates. Thus, by taking the inverse images of the straight vertical disks $\{(z_n, w_n+t) \mid |t| < 1\}$ and intersecting with the bidisks $\Delta^2(P_j,1)$, we obtain a family of properly embedded vertical graphs. For fixed $j$ these graphs form a Cauchy sequence, and the limits give the local strong stable manifolds through the points $P_j$.

We will try to mimic this construction for transcendental H\'enon maps, as long as we have an escaping orbit $(P_j)$ with strong horizontal expansion. Notice that it is not sufficient to control only the derivatives at the point $P_j$. We will need neighborhoods similar to the bidisks $\Delta^2(P_j,1)$ as above, with overflow in the horizontal and underflow in the vertical direction. It turns out that for the escaping points constructed in Theorem~\ref{eremenkotheorem} we have exactly the right information needed to prove the existence of the stable manifolds. Our proof, as the proof sketched above, closely follows the graph transform method. Most references in the literature deal with the setting where the orbit $P_j$ is either periodic or remains in a compact subset. For the reader's convenience we  give a detailed proof that does not rely on any sources, without claiming that the methods presented here are new. The contents of this section rely on the construction in the proof of Theorem~\ref{eremenkotheorem}.

\begin{theorem}\label{thm:Curves of escaping points}
Let $F(z,w)=(f(z)-\delta w,z)$ be a transcendental H\'enon map. Let $P_0$ be an Eremenko escaping point. Then there exists an injective holomorphic immersion $\gamma\colon \C\to \C^2$ whose image $\Sigma^s(P_0)$ contains $P_0$ and is contained in the escaping set $I(F)$. For every $Q\in \Sigma^s(P_0)$, $\|F^n(Q)-F^n(P_0)\|$ converges to zero at least exponentially fast as $n\ra\infty$.
\end{theorem}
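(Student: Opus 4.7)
The plan is to construct $\Sigma^s(P_0)$ via the graph transform method, as sketched in the introduction to this section. Denote the forward orbit $P_n := F^n(P_0) = (z_n, w_n)$, with $z_n \in D_n$ and $w_n = z_{n-1} \in D_{n-1}$, and write $\kappa_n := M_n N_n / r_n$ for the horizontal expansion rate at $z_n$, which grows super-exponentially by \eqref{jonsnow}. First I would choose a bidisc
\[
B_n := \D(z_n, \rho_n) \times \D(w_n, \sigma_n) \subset \NN_\delta(D_n) \times \NN_\delta(D_{n-1}),
\]
with $\sigma_n$ bounded below (comparable to the inradius of $D_{n-1}$ at $z_{n-1}$) and $\rho_n$ of order $|\delta|\sigma_n/\kappa_n$, so that $F$ satisfies a Markov-type condition: $F(B_n)$ overflows $B_{n+1}$ in the $z$-direction (by the enormous expansion $|f'(z) - \delta\varphi_n'(z)| \asymp \kappa_n$ of \eqref{jonsnow}) and underflows in the $w$-direction (since the new $w$-coordinate equals the old $z$).

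Next I would introduce the space $\mathcal{G}_n$ of holomorphic functions $s \colon \D(w_n, \sigma_n) \to \D(z_n, \rho_n)$ with $s(w_n) = z_n$, which parametrize ``vertical'' graphs $\{(s(w), w)\}$ in $B_n$, and define a pullback graph transform $\mathcal{T}_n \colon \mathcal{G}_{n+1} \to \mathcal{G}_n$ sending $s_{n+1}$ to the unique $s_n$ satisfying
\[
f(s_n(w)) - s_{n+1}(s_n(w)) = \delta w,
\]
equivalently $s_n = g^{-1} \circ (\delta \, \cdot)$ with $g := f - s_{n+1}$. The Koebe $\tfrac14$-theorem applied to $g$, together with $|g'| \gtrsim \kappa_n$, ensures existence and uniqueness of $s_n$ for the calibrated radii; differentiating the implicit relation yields $|s_n'| \lesssim 1/\kappa_n$ and a Banach contraction $\|\mathcal{T}_n(s) - \mathcal{T}_n(\tilde s)\|_\infty \lesssim \kappa_n^{-1} \|s - \tilde s\|_\infty$. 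Backward iteration, $\mathcal{T}_0 \circ \mathcal{T}_1 \circ \cdots \circ \mathcal{T}_{N-1}$ applied to any seed $s_N \in \mathcal{G}_N$, therefore produces a Cauchy sequence in $\mathcal{G}_0$ whose limit $s_0^\infty$ is independent of the seed. The graph $\Sigma^s_{\mathrm{loc}}(P_0) := \{(s_0^\infty(w), w)\}$ is then the local strong stable manifold of $P_0$; the same construction at every $P_n$ yields $\Sigma^s_{\mathrm{loc}}(P_n)$, and continuity of $\mathcal{T}_n$ forces the invariance $F(\Sigma^s_{\mathrm{loc}}(P_n)) \cap B_{n+1} \subset \Sigma^s_{\mathrm{loc}}(P_{n+1})$.

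Finally I would set $\Sigma^s(P_0) := \bigcup_{n \geq 0} F^{-n}(\Sigma^s_{\mathrm{loc}}(P_n))$, an increasing union of biholomorphic images of discs, hence a simply connected Riemann surface. To show it is biholomorphic to $\C$ rather than $\D$, I would verify that its intrinsic Kobayashi pseudo-metric vanishes: any two points $Q, Q' \in \Sigma^s(P_0)$ eventually lie in some $F^{-n}(\Sigma^s_{\mathrm{loc}}(P_n))$, and their Poincar\'e distance there equals the Poincar\'e distance of $F^n(Q), F^n(Q')$ inside $\Sigma^s_{\mathrm{loc}}(P_n)$ (since $F^n$ restricts to a biholomorphism between these discs); the latter tends to zero because $F^n(Q), F^n(Q') \to P_n$ super-exponentially, while the Poincar\'e density of $\Sigma^s_{\mathrm{loc}}(P_n)$ at $P_n$ is uniformly controlled by $\sigma_n$. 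A biholomorphism $\C \simeq \Sigma^s(P_0)$ composed with the inclusion into $\C^2$ then yields the required injective holomorphic immersion $\gamma$; injectivity comes from the nested graph structure. Exponential convergence $\|F^n(Q) - P_n\| \to 0$ follows by telescoping the Lipschitz bounds $|(s_n^\infty)'| \lesssim 1/\kappa_n$ along the orbit, giving decay of order $\prod_{j<n} \kappa_j^{-1}$.

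The main obstacle I expect is the calibration of the bidiscs $B_n$ so that the competing conditions hold uniformly in $n$: (i) $B_n \subset \NN_\delta(D_n) \times \NN_\delta(D_{n-1})$ where the derivative estimates are valid, (ii) Koebe forces $\rho_n \gtrsim |\delta|\sigma_n/\kappa_n$ so that $\mathcal{T}_n$ has a large enough range, (iii) the vertical underflow requires $\rho_n \leq \sigma_{n+1}$, and (iv) $\sigma_n$ must stay bounded below so that the local stable manifolds have a definite size, a feature crucial for the uniformization step. The inhomogeneity of the dynamics makes this balance more delicate than in the usual periodic-point or uniformly hyperbolic setting, but once the correct scales are in place the super-geometric size of $\kappa_n$ renders the remaining contraction and convergence estimates routine.
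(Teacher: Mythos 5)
Your proposal is correct in spirit and reaches the same conclusion, but the technical execution genuinely differs from the paper's. The paper first applies the shear $\Phi_n(z,t) = (z, t+\varphi_n(z))$, which flattens the reference graph $\Gamma_n$ onto the cylinder $\U_n = D_n \times \mathbb D$; in the sheared coordinates one can pull back the honest vertical lines $L_{n,n}$ through $\tilde P_n$, and the Cauchy property of the sequence $L_{n,j}$ follows from elementary horizontal-overflow / vertical-underflow estimates \eqref{eqtn:horizontal overflow}--\eqref{eqtn:vertical underflow} plus the backward invariance of a vertical cone field. This sidesteps exactly the calibration headache you flag at the end: in the sheared picture the ``radius'' in the vertical direction is a fixed $1$, while the domain $D_n$ supplies the horizontal overflow automatically. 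Your approach works directly in $(z,w)$-coordinates with bidiscs of shrinking horizontal radius $\rho_n \asymp |\delta|\sigma_n/\kappa_n$ and bounded vertical radius $\sigma_n$, formulating a graph transform $\mathcal T_n$ as a Banach contraction. This is closer to the textbook graph-transform for hyperbolic sets and is perhaps more transparent dynamically, but you do then owe the reader the verification that $g = f - s_{n+1}$ is univalent on $\D(z_n,\rho_n)$ for \emph{every} admissible seed $s_{n+1}$ (which works because $|s_{n+1}'| \lesssim \rho_{n+1}/\sigma_{n+1} \to 0$ is dominated by $|f'| \asymp \kappa_n$, and $f'$ has nearly constant argument on $D_n$ by the Wiman–Valiron estimate \eqref{eqtn:WVEstimatefj}), and the slight tension in your stated requirements for $\sigma_n$ (``bounded below'' vs.\ ``comparable to the inradius of $D_{n-1}$'', the latter tending to infinity) should be resolved: a fixed small constant $\sigma_n \equiv \sigma$ suffices and is what the paper effectively uses. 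The concluding uniformization step is also handled differently: the paper uses the strict nesting $F(\Sigma^s_{\mathrm{loc}}(P_j)) \subset \Sigma^s_{\mathrm{loc}}(P_{j+1}) \cap \{|t|<1/2\}$ from Lemma~\ref{lemmaproper} to force the Kobayashi metric of the increasing union to vanish, while you argue via boundedness of the Poincar\'e density at $P_n$ together with super-exponential contraction of $DF^n$ along the stable direction; both are valid. Overall a correct alternative route, with the trade-off that you avoid the shear but pay for it in the radius bookkeeping.
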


\begin{proof}[Proof of Theorem~\ref{thm:Curves of escaping points}]
Let $P_0$ be an Eremenko escaping point and  for all $n\geq 0$ let $\Gamma_n, D_n,\varphi_n$ be defined as in the proof of Theorem \ref{eremenkotheorem}. In particular,  $P_0=\bigcap_{n\geq 0} F^{-n}(\Gamma_n)\subset \Gamma_0$.

Since $F$ is an automorphism it is sufficient to prove the claim up to replacing $P_0$ with a  point $P_n:=F^n(P_0)$ for some $n$ large enough.

We define the domains
$$
\begin{aligned}
U_n & := \{(z,w +t) : (z,w) \in \Gamma_n, |t| < 1\}\\
& = \{ (z,  \varphi_n(z)+t) : z \in D_n, |t| < 1\}.
\end{aligned}
$$
Notice that the shear $\Phi_n (z,t) := (z, t+\varphi_n(z))$  is  a biholomorphism from    $\U_n:=D_n \times \mathbb D$ to $U_n$ and let us define
$$ \tilde{P}_n:=(\Phi_n)^{-1}(P_n).
$$

Suppose that the point $(z, t+\varphi_n(z))$ belongs to $ U_n \cap F^{-1}(U_{n+1})$. Then clearly $F\circ\Phi_n(z,t)\in U_{n+1}$, hence $\tilde{z}:=f(z)-\delta\phi_n(z)-\delta t\in D_{n+1}$. Since  $\phi_{n+1}$ is defined in a $\delta$-neighborhood of $D_{n+1}$  we have that $\phi_{n+1}(\tilde{z}+\delta t)$ is well defined for $|t|<1$, and hence we can write
$$
F\circ \Phi_n(z,t)=F(z, t+\varphi_n(z))  = (f(z) - \delta \phi_n(z)- \delta t, z)
%& = (\tilde{z}, \phi_{n+1}(\tilde{z}+\delta t)\\
= (\tilde{z}, \phi_{n+1}(\tilde{z}) + \tilde{t}),
$$
where $\tilde{t}:=z-\varphi_{n+1}(\tilde z).$
%Since $\Phi_n (z,t) := (z, t+\phi_n(z))$ t
This induces a map $\tilde{F}: \Phi_n^{-1}(U_n \cap F^{-1}(U_{n+1}))\ra \U_{n+1}$ defined as
$$
\tilde F := \Phi_{n+1}^{-1} \circ F \circ \Phi_n :(z,t) \mapsto (\tilde{z},\tilde{t}).
$$

\begin{lemma}\label{continuitylemma}
Let $(z,t+\varphi_{n}(z))$ be a point in the connected component  containing $P_n$ of $ U_n \cap F^{-1}(U_{n+1})$.
Then $$\varphi_{n+1}(f(z)-\delta\varphi_n(z))=z.$$
\end{lemma}
\begin{proof}
Let $\gamma\colon [0,1]\to U_n$ be a continuous curve joining the points  $P_n$ and $(z,t+\varphi_{n}(z))$, all contained inside the connected component. Denote $$\gamma(s):=(z(s), t(s)+\varphi_n(z(s)).$$ For all
$s\in [0,1]$, the point $f(z(s))-\delta\varphi_n(z(s))$ is contained in $\mathcal{N}_\delta(D_{n+1})$. Consider the equation
$$\varphi_{n+1}(f(z(s))-\delta\varphi_n(z(s)))=z(s).$$
This holds for $s=0$ since $z(0)=P_n$ and by continuity it holds for $s=1$.
\end{proof}
\begin{lemma}
Let $(z_1, t+\varphi_n(z_1))$ and $(z_2, t+\varphi_n(z_2))$ both belong to the connected component  containing $P_n$ of $ U_n \cap F^{-1}(U_{n+1})$.
There exist a constant $\Theta=\Theta(n) \stackrel{n\to\infty}\longrightarrow +\infty$    and a constant  $\Theta'>0$ independent of $n$,
such that the following estimates hold:
\begin{align}
\label{eqtn:horizontal overflow} |\tilde{z}_1-\tilde{z}_2|&\geq \Theta |z_1-z_2|;\\
\label{eqtn:vertical underflow}|\tilde{t_1}-\tilde{t_2}|&\leq  \Theta' |z_1-z_2|.
\end{align}
\end{lemma}

\begin{proof}
By definition of $\tilde{z}_1,\tilde{z_2}$ we have
\begin{equation}\label{daenerys}
|\tilde{z}_1-\tilde{z}_2|=|f(z_1)-\delta\phi_n(z_1)-(f(z_2)-\delta\phi_n{z_2})|.
\end{equation}
Since  $(z_1, t+\varphi_n(z_1))$ and $(z_2, t+\varphi_n(z_2))$  belong to the connected component  containing $P_n$ of $ U_n \cap F^{-1}(U_{n+1})$, by Lemma \ref{continuitylemma} we have that
 $z_1=\varphi_{n+1}(\tilde z_1+\delta t)$ and that  $z_2=\varphi_{n+1}(\tilde z_1+\delta t)$.
Let $\gamma\colon[0,1]\to \C$ be the segment joining the points $\tilde z_1+\delta t$ and $\tilde z_2+\delta t$. Then
$$|z_2-z_1|\leq \int_0^1 |\varphi'_{n+1}(\gamma(t))|dt\leq|\tilde z_2-\tilde z_1| \max_{\mathcal{N}_\delta(D_{n+1})} |\varphi'_{n+1}|.$$
Thanks to (\ref{sam}), setting $\Theta:= \frac{C^{-1}M_nN_n}{r_n}$ we obtain (\ref{eqtn:horizontal overflow}).
We now prove (\ref{eqtn:vertical underflow}).
We have $$|\tilde t_1-\tilde t_2|=|z_1-\varphi_{n+1}(\tilde z_1)-(z_2-\varphi_{n+1}(\tilde z_2))|\leq |z_1-z_2|+|\varphi_{n+1}(\tilde z_2)-\varphi_{n+1}(\tilde z_1)|.$$ Hence we are reduced to estimate $|\varphi_{n+1}(\tilde z_2)-\varphi_{n+1}(\tilde z_1)|$.
Arguing as above we obtain $$|\varphi_{n+1}(\tilde z_2)-\varphi_{n+1}(\tilde z_1)|\leq|\tilde z_2-\tilde z_1| \max_{\mathcal{N}_\delta(D_{n+1})} |\varphi'_{n+1}|.$$
From (\ref{daenerys}) it follows that
$$|\varphi_{n+1}(\tilde z_2)-\varphi_{n+1}(\tilde z_1)|\leq\max_{D_n}|f'-\delta\varphi'_n| \max_{\mathcal{N}_\delta(D_{n+1})} |\varphi'_{n+1}||z_2-z_1|.$$
The result therefore follows from (\ref{jonsnow}) and (\ref{sam}) setting $\Theta':=C^2+1$.
\end{proof}
\begin{lemma}\label{lemmaproper}
Let $Q:=(z, t+\varphi_n(z))$ belong to the connected component  containing $P_n$ of $ U_n \cap F^{-1}(U_{n+1})$.
Then there is a contact $\alpha=\alpha(n) \stackrel{n\to \infty}\longrightarrow 0$ such that
$$|\tilde t|<\alpha |\delta t|.$$
\end{lemma}
 \begin{proof}
% Consider the point $F(z, t+\varphi_n(z))=   (\tilde z,z)\in U_{n+1}$. The point $\tilde Q:=(\tilde z, \varphi_{n+1}(\tilde z))\in \Gamma_{n+1}$ lies on the same vertical line as $Q$. The $z$-coordinate of the point  $F^{-1}(\tilde Q)$ is $\varphi_{n+1}(\tilde z)$.
% Moreover, since $Q$ belongs to the connected component  containing $P_n$ of $ U_n \cap F^{-1}(U_{n+1})$, we have that
% $\varphi_{n+1}(\tilde z+\delta t)=z$.
% Hence
% $$|\tilde t|= |\varphi_{n+1}(\tilde z)-z|=|\varphi_{n+1}(\tilde z)-\varphi_{n+1}(\tilde z+\delta t)|\leq \max_{\mathcal{N}_\delta(D_{n+1})}|\varphi'_{n+1}||\delta t|.$$
 Consider the point $F(Q)=   (\tilde z,z)\in U_{n+1}$.  Since $Q$ belongs to the connected component  containing $P_n$ of $ U_n \cap F^{-1}(U_{n+1})$, by Lemma \ref{continuitylemma} we have that $z=\varphi_{n+1}(\tilde z+\delta).$
Hence
 $$|\tilde t|= |z-\varphi_{n+1}(\tilde z)|=|\varphi_{n+1}(\tilde z+\delta t)-\varphi_{n+1}(\tilde z)|\leq \max_{\mathcal{N}_\delta(D_{n+1})}|\varphi'_{n+1}||\delta t|.$$

 \end{proof}

Let $L_n = L_{n,n}$ be the intersection with $\U_n$ of the straight vertical line through $\tilde{P}_n$, and recursively define $L_{n,j}$ for all $n\geq j\geq 0$ as the connected component of $\tilde{F}^{-1} ( L_{n,{j+1}}) \cap \U_j$ that contains the point $\tilde{P}_j$. We will show that for $j_0$ sufficiently large the pullbacks $L_{n,j_0}$ converge to a proper holomorphic disk in $\mathbb U_{j_0}$, corresponding to the local stable manifold through $P_{j_0}$.

We claim that for each $n,j$ the curve $L_{n,j}$ is vertical, that is, its tangent space is contained in the vertical cone field
$$
C_{ver}(z,t) = \{(v_1, v_2)\in\C^2: |v_1| \ge|v_2| \},
$$
 and that $L_{n,j}$ it is a graph of the $t$-direction.
Since each  $L_{n,j}$ is a pullback of the vertical curve $L_{n,n}$,  it is enough to show  that  the vertical cone field is  backward invariant.

Observe that
\begin{displaymath}
D\tilde{F}(z,t)=
\left( \begin{array}{cc}
f'(z)  - \delta \varphi_n^\prime(z) & -\delta   \\
1 -\varphi^\prime_{n+1}(\tilde{z}) (f'(z)-\delta\phi_n'(z))  &   \delta \varphi^\prime_{n+1}(\tilde{z})
 \end{array} \right)=:\left( \begin{array}{cc}
A & a_1   \\
a_2  &  a_3
 \end{array} \right).
\end{displaymath}
By equations \eqref{jonsnow} and \eqref{sam} it follows that $\frac{a_i}{A}\ra0$ as $n\ra\infty$.% uniformly for  $z\in D_n$, $|t|<1$. %Define
%$$
%C^\alpha_{hor}(z,t) =  \{(v_1, v_2)\in\C^2: \alpha|v_1| \ge|v_2| \}.
%$$
%It follows that $DF(z,t)$ maps $C^3_{hor}(z,t)$ into $C^1_{hor}(\tilde{z},\tilde{t})$. As a consequence $DF^{-1}$ leaves invariant the vertical cone field defined by
%$$
%C^3_{ver}(z,t) = \{(v_1, v_2)\in\C^2: 3|v_1| \ge|v_2| \}.
%$$
%Since each $L_{n,n}$ is vertical, it follows that each $L_{n,j}$ is vertical, i.e. its tangent space is contained in the vertical cone field $C^3_{ver}$.

Denote by $\pi_2(z,w):=w$ the projection on the second variable. The fact that the $\delta$-neighborhood $\mathcal{N}_\delta(D_{n+1})$ satisfies $\varphi_{n+1}(\mathcal{N}_\delta(D_{n+1})) \subset D_n$ and $(\varphi_{n+1}(\partial \mathcal{N}_\delta(D_{n+1}))\times \mathbb D ) \cap U_{n+1} = \emptyset$ implies overflow in the horizontal direction. Combined with the underflow in the vertical direction given by Lemma \ref{lemmaproper}, this implies that the restriction $\pi_2\colon L_{n,j}\to \D$ is a proper holomorphic map, and thus a branched covering.
Since the curve $L_{n,j}$ is almost vertical, the covering cannot have branch points. Hence $\pi_2\colon L_{n,j}\to \D$ is a biholomorphism, and thus $L_{n,j}$ is a graph over $\D$.

We introduce the graph distance
$$
\mathrm{dist}(L_{n,j}, L_{m,j}) := \sup_{|t| < 1} \{|z_n - z_m| \mid (z_n,t) \in L_{n,j}, (z_m, t) \in L_{m,j}\}.
$$
As the tangent spaces to each of the graphs $L_{n,j}$ are vertical and they all pass through the point $P_j$, it follows that $\mathrm{dist}(L_{n,j}, L_{m,j})$ is bounded from above by the constant $1$, independently of $n,m,j$.

\begin{lemma}
We have that
\begin{equation}\label{eqtn:Cauchy first step}
\mathrm{dist}(L_{n,j}, L_{m,j}) \le \frac{1}{2}\cdot \mathrm{dist}(L_{n,j+1}, L_{m,j+1}).
\end{equation}
\end{lemma}
\begin{proof}
Let $(z_1, t)$ be the coordinates   of a point  $Q_1\in L_{n,j}$ and $(z_2,t)$ be the coordinates  of a point $Q_2 \in L_{m,j}$. Denote their images under $\tilde{F}$ by $(\tilde{z}_1, \tilde{t}_1) \in L_{n,j+1}$ and $(\tilde{z}_2, \tilde{t}_2) \in L_{m,j+1}$, and let $\tilde{z}_3$ be the unique value for which $(\tilde{z}_3, \tilde{t}_1) \in L_{m,j+1}$. The claim follows if we can show that  $|\tilde{z}_1-\tilde{z}_3|\geq 2|{z}_1-{z}_3|$.

 By  (\ref{eqtn:horizontal overflow})   it follows that
$$
|\tilde{z}_1 - \tilde{z}_2| \ge \Theta \cdot |z_1 - z_2|,
$$
while by (\ref{eqtn:vertical underflow}) it follows that

$$
|\tilde{t}_1 - \tilde{t}_2| \le \Theta' \cdot |z_1 - z_2|.
$$

Since the tangent space to $L_{m,j+1}$ is contained in the  vertical cone field and the two points $(\tilde{z}_2,\tilde{t}_2)$ and $(\tilde{z}_3, \tilde{t}_1)$ belong to $L_{m,j+1}$, it follows that
$$
|\tilde{z}_2 - \tilde{z}_3| \le |\tilde{t}_2-\tilde{t}_1|.
$$
It follows that
$$
|\tilde{z}_1 - \tilde{z}_3| \ge |\tilde{z}_1 - \tilde{z}_2| - |\tilde{z}_2 - \tilde{z}_3| \ge
\Theta|z_1-z_2|- |\tilde{t}_2-\tilde{t}_1| \ge  (\Theta-\Theta')|z_1-z_2|.
$$
Since $\Theta \ra \infty$ as $n \rightarrow \infty$ while $\Theta^\prime$ is constant, we may assume that $\Theta \ge 2+ \Theta'$, proving (\ref{eqtn:Cauchy first step}).
\end{proof}

As a consequence for fixed $j$ the graphs $L_{n,j}$ form a Cauchy sequence, and converge to a limit graph $\Sigma_j$. We denote by $\Sigma_{loc}^s(P_j)$ the graph $\Phi_j(\Sigma_j)$.

We now conclude the proof by showing that $\Sigma_{loc}^s(P_j)$ can be extended to a full complex curve $\Sigma^s(P_j)$ of escaping points, and that for any $Q\in\Sigma^s(P_j)$ we have that $\|F^n(P)-F^n(Q)\|\ra0$ at least exponentially fast as $n\ra\infty$.

By construction we have that $\tilde{F}(\Sigma_j) \subset \Sigma_{j+1}$, and  by (\ref{lemmaproper}) we obtain
$$
\tilde{F}(\Sigma_j) \subset \Sigma_{j+1} \cap \left\{(z,t):|t|< \frac{1}{2}\right\},
$$
which  by definition of $\tilde{F}$ implies that
\begin{equation} \label{eqn:kobayashi}
F(\Sigma_{loc}^s(P_j)) \subset \Sigma_{loc}^s(P_{j+1})\cap \left\{(z,t+\phi_n(z)):|t|< \frac{1}{2}\right\}.
\end{equation}

It follows that the global stable manifold through $P_j$ defined as
$$
\Sigma^s(P_j) := \bigcup_{n \ge j} F^{j-n} \Sigma_{loc}^s(P_n)
$$
is an increasing union of disks, and equation \ref{eqn:kobayashi} implies that the Kobayashi metric vanishes identically. Thus $\Sigma^s(P_j)$ is biholomorphic to $\mathbb C$. In particular the set $\Sigma^s(P_j)$ is unbounded and consists of escaping points.

Finally, we need to show the existence of  $C>0,\lambda<1$ such that for $Q\in \Sigma^s(P_j) $ we have
$$\|F^n(Q)-F^n(P_j)\|\leq C\lambda^n.$$

By the definition of the shear $\tilde{F}$ it is enough to check that for $Q\in \Sigma_j $ we have
$$
\|\tilde{F}^n(Q)-\tilde{F}^n(\tilde{P}_j)\|\leq C\lambda^n.
$$
To prove the latter let $\tilde{F}^n(Q)=(z'_n, t'_n)$ and $\tilde{F}^n(\tilde{P}_j)=(z_n,0)$. By (\ref{eqn:kobayashi}) we have $|t_n'|\leq\frac{1}{2^n}t_0'$, and the fact that the $\Sigma_j$ are vertical graphs passing through $\tilde{P}_j$ implies $|z_n'-z_n|\leq|t_n'|$. This concludes the proof.

\end{proof}

\section{Dense Stable Manifold}

We denote by $\mathcal{H}(\mathbb C^2)$ the space of all entire H\'enon maps equipped with the compact open topology. The subspace consisting of those H\'enon maps with Jacobian determinant $\delta$ will be denoted by $\mathcal{H}_\delta(\mathbb C^2)$. Throughout this section we will consider a fixed $\delta$ with $|\delta| > 1$.

\begin{theorem}\label{thm:4.1}
There exists a  dense $G_\delta$-subset  $\mathcal{U}$ of $\mathcal{H}_\delta(\mathbb C^2)$ such that
every map $F\in \mathcal{U}$ has a  fixed saddle point  whose stable manifold  is dense in $\C^2$.
\end{theorem}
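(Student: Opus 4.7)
The plan is a Baire category argument in the Fréchet space $\mathcal{H}_\delta(\mathbb C^2)$, identified with the space of entire transcendental maps $f\colon\mathbb C\to\mathbb C$ equipped with the compact-open topology. Fix a countable basis $\{B_k\}_{k\in\mathbb N}$ for the topology of $\mathbb C^2$. A fixed point of $F(z,w)=(f(z)-\delta w,z)$ is a pair $(z_0,z_0)$ with $f(z_0)=(1+\delta)z_0$, of which Picard produces infinitely many; the characteristic polynomial of $DF$ at $(z_0,z_0)$ is $\lambda^2-f'(z_0)\lambda+\delta$. Since $|\delta|>1$, the multipliers $\mu_\pm$ satisfy $\mu_+\mu_-=\delta$, and the saddle condition $|\mu_-|<1<|\mu_+|$ is open and dense in $f'(z_0)$. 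Each saddle persists holomorphically under perturbations by the implicit function theorem, and one may select a canonical persistent branch $F\mapsto p(F)$ defined on an open dense set $\mathcal V\subset\mathcal H_\delta$ (for instance the saddle of smallest $|z_0|$ among all saddles of $F$). For each $k$ put
\[
\mathcal U_k:=\{F\in\mathcal V:W^s(p(F))\cap B_k\neq\emptyset\},
\]
and set $\mathcal U:=\bigcap_k\mathcal U_k$. Once each $\mathcal U_k$ is shown to be open and dense in $\mathcal V$, $\mathcal U$ is a dense $G_\delta$ in $\mathcal V$, hence in $\mathcal H_\delta$ (as $\mathcal V$ is open dense). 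For $F\in\mathcal U$ the saddle $p(F)$ has stable manifold meeting every $B_k$, hence dense in $\mathbb C^2$.

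Openness of $\mathcal U_k$ follows from the continuous dependence of $W^s_{\mathrm{loc}}(p(F))$ on $F$, a standard consequence of the stable manifold theorem for a hyperbolic fixed point (equivalently by the graph-transform argument of Section~3 applied to $p(F)$). Since $W^s(p(F))=\bigcup_n F^{-n}\bigl(W^s_{\mathrm{loc}}(p(F))\bigr)$ and forward iteration is continuous in $F$ in the compact-open topology, the intersection condition $W^s(p(F))\cap B_k\neq\emptyset$ is open in $F$.

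The heart of the proof is density of $\mathcal U_k$. Given $F_0\in\mathcal V$ with saddle $p_0=(z_0,z_0)$, a basic neighborhood $\mathcal W=\{\tilde F:\sup_K|\tilde f-f_0|<\epsilon\}$ with $K\Subset\mathbb C$ containing $z_0$, and $q\in B_k$, we construct $\tilde F\in\mathcal W$ with $q\in W^s(p(\tilde F))$. Writing the orbit $q_\ell=F_0^\ell(q)=(\zeta_\ell,\zeta_{\ell-1})$ with $\zeta_{\ell+1}=f_0(\zeta_\ell)-\delta\zeta_{\ell-1}$, we seek a perturbation $\tilde f=f_0+\phi$, $\phi$ entire with $\sup_K|\phi|<\epsilon$, so that $\tilde F^N(q)\in W^s_{\mathrm{loc}}(p(\tilde F))$ for some large $N$. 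Two ingredients are combined: (a) a Runge/Mittag--Leffler-type approximation producing entire $\phi$ with $\sup_K|\phi|<\epsilon$ and arbitrarily prescribed values at finitely many points outside $K$; (b) a preliminary small perturbation, exploiting the transcendental flexibility of $f_0$, that forces the forward orbit $\{\zeta_\ell\}$ to exit $K$, so that $\zeta_{N-1}\notin K$ for some $N$. With $\zeta_{N-1}\notin K$, one prescribes $\phi(\zeta_{N-1})$ (and possibly values at a few further orbit points) to place $\tilde F^N(q)$ on $W^s_{\mathrm{loc}}(p(\tilde F))$, a codimension-one transversality condition solved by the implicit function theorem; the required non-degeneracy is supplied by the strong derivative growth of $f_0$ at $\zeta_{N-1}$ of Wiman--Valiron type, as in Section~2. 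Persistence of the saddle $p$ is automatic since $z_0\in K$ keeps $\phi$ small near $z_0$.

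\textbf{Main obstacle.} The essential difficulty is step (b) in the density argument --- forcing the orbit of an arbitrary $q\in B_k$ out of a prescribed compact $K\subset\mathbb C$ by a perturbation of size less than $\epsilon$. When $q$ lies in the Fatou set of $F_0$ the orbit may be bounded, and one must carefully exploit the Runge-type freedom of $\phi$ outside $K$ to destabilize the orbit while keeping $\tilde f$ close to $f_0$ on $K$ (and thereby preserving the persistent saddle). The transcendental nature of $f_0$ and the resulting abundance of admissible perturbations make this possible, but controlling the dynamical consequences of these perturbations along the entire finite orbit segment is the delicate point.
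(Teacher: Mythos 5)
Your Baire-category skeleton is right, but there are two genuine gaps, one of which you yourself flag as the ``main obstacle'' and which the paper resolves by a simple observation you miss. First, the step you identify as delicate---forcing the forward orbit of $q$ out of the compact $K$ by a small perturbation---does not need a perturbation at all. Since $|\delta|>1$, the map $F$ expands volume by the factor $|\delta|^2>1$, so no neighborhood of $q$ can have all of its forward orbits confined to a bounded polydisk; hence there is already a point $\tilde q$ arbitrarily close to $q$ whose $F_0$-orbit leaves $\{|z|<R,|w|<R\}$, and the H\'enon structure ($w$-coordinate is the previous $z$-coordinate) then forces it out of the cylinder $\{|z|<R\}$. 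This is the key use of the hypothesis $|\delta|>1$ in the theorem, and without it your plan stalls exactly where you say it does. Your proposed fix---destabilizing a bounded Fatou orbit by wiggling $f$ outside $K$---would require controlling the entire perturbed orbit segment and has no evident mechanism for producing escape.

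Second, your ``canonical persistent branch $F\mapsto p(F)$ on an open dense $\mathcal V$'' is not established and is in fact problematic: a selection rule such as ``the saddle of smallest $|z_0|$'' is discontinuous when two saddles tie in modulus and swap, and there is no reason such a global continuous selection exists. The paper sidesteps this entirely: it takes a countable dense family $\{\psi_j\}$ of maps each equipped (after a small Runge perturbation) with a chosen saddle, tracks that saddle on a small ball $B_j\subset\mathcal H_\delta$, and then recursively shrinks and prunes the balls so that they are pairwise disjoint with dense union $A=\bigcup B_j$. The marked saddle $\eta(F)$ is then well defined and continuous on all of $A$, with no gluing issue between balls. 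Finally, where you appeal to an implicit-function-theorem transversality to land $\tilde F^N(q)$ on $W^s_{\mathrm{loc}}$, the paper instead performs explicit orbit surgery via Runge: it finds a point $(z_1,w_1)$ on the stable manifold with $|w_1|>R$, prescribes $\tilde f(z_0)=w_1+\delta w_0$ and $\tilde f(w_1)=z_1+\delta z_0$ (and keeps $\tilde f=f$ along the two finite orbit segments after arranging the $z$-coordinates to be distinct), so that $\tilde F^2(z_0,w_0)=(z_1,w_1)$ and thereafter the orbit follows the old stable manifold into the local one; the ``close to constant near $z_0$ and $w_1$'' condition then gives a uniform lower bound on the image neighborhood, making the local stable manifold of $\tilde F$ actually intersect it. This is more concrete and robust than the transversality sketch.
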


In particular the Julia set any map $F\in \mathcal{U}$ is equal to $\C^2$.

\medskip

The analogous statement was proved for volume preserving holomorphic automorphisms of $\mathbb C^n$ in \cite{PVW}, and generalized to Stein manifolds with the volume density property in \cite{AL}. Our proof closely follows that in \cite{AL}, with only mild variations due to the fact that the maps under consideration are of a more restrictive form. In these steps the fact that our maps are volume expanding simplifies the proof considerably. We will give the proof, emphasizing the differences with the proof in \cite{AL}.

\medskip

\noindent {\bf Step 1.} We note that $\mathcal{H}_\delta(\mathbb C^2)$ is  homeomorphic to the Fr\'echet space of entire functions on $\mathbb C$. As such it a Baire space, it is  separable, and its topology is induced by a complete distance $d$.
 Let $\{\varphi_j\}_{j \in \mathbb N}$ be a dense countable subset.

\medskip

\noindent {\bf Step 2.} Using Runge approximation we can perturb each $\varphi_j$ to obtain an entire H\'enon map $\psi_j$ with $d(\psi_j, \varphi_j) < \frac{1}{j}$, such that each $\psi_j$ has a saddle fixed point. Note that the saddle fixed point may be constructed arbitrarily close to infinity. The collection $\{\psi_j\}_{j \in \mathbb N}$ is still dense in $\mathcal{H}_\delta(\mathbb C^2)$.

\medskip

\noindent {\bf Step 3.} For each $\psi_j$ we can find an arbitrarily small ball $B_j\subset \mathcal{H}_\delta(\mathbb C^2)$ in which the chosen saddle point $\psi_j$ can be followed continuously, and the corresponding local stable manifold is a graph over a fixed direction in a linearly embedded polydisk $\Delta^2_j$ which does not depend on the map $F \in B_j$. For $F \in B_j$ we denote the marked saddle fixed point by $\eta(F)$ and its stable manifold by $\Sigma^s_F(\eta(F))$.

\medskip

\noindent {\bf Step 4.} By recursively decreasing the radius of $B_j$ if $\psi_j$ is not contained in the closure of an earlier defined ball, or dropping $\psi_j$ if it is, we obtain a countable collection of pairwise disjoint balls $B_j$, whose union $A = \bigcup B_j$ is dense in $\mathcal{H}_\delta(\mathbb C^2)$. Note that since the balls $B_j$ are pairwise disjoint, the notation introduced in the previous step is now justified.

\medskip

Since  $\mathcal{H}_\delta(\mathbb C^2)$ is a Baire space, Theorem \ref{thm:4.1} is now is a direct consequence of the following lemma.

\begin{lemma}
Let $q \in \mathbb C^2$ and $\epsilon >0$. The subset $U(q,\epsilon) \subset \mathcal{H}_\delta(\mathbb C^2)$ given by
$$
U(q,\epsilon) = \{f \in A : {\rm dist}(\Sigma_F(\eta(f)),q) < \epsilon\}
$$
is open and dense.
\end{lemma}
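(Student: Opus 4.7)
The statement splits into openness and density. Openness is direct: if $F\in U(q,\epsilon)$ lies in $B_j$, write the witness as $F^{-n}(p_0)$ with $p_0\in\Sigma^s_{loc}(\eta(F))$. Within $B_j$, the saddle continuation $F'\mapsto\eta(F')$, its local stable manifold (a graph over a fixed direction in $\Delta^2_j$), and each fixed inverse iterate $(F')^{-n}$ depend continuously on the map in the compact-open topology; so $(F')^{-n}(p_0')$ stays close to $F^{-n}(p_0)$ for any $p_0'\in\Sigma^s_{loc}(\eta(F'))$ close to $p_0$ and $F'$ near $F$, yielding $F'\in U(q,\epsilon)$.

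For density, fix $F_0\in A\cap B_j$, write $F_0(z,w)=(f_0(z)-\delta w,z)$, and pick $p_0\in\Sigma^s_{loc}(\eta(F_0))$. The plan is to find $F_1$ of the form $(f_1(z)-\delta w,z)$ arbitrarily close to $F_0$ and still in $B_j$, together with a length-$N$ forward $F_1$-orbit $q_0,q_1,\ldots,q_N$ satisfying $\|q_0-q\|<\epsilon$ and $q_N\in\Sigma^s_{loc}(\eta(F_1))$; then $q_0\in\Sigma^s_{F_1}(\eta(F_1))$ lies within $\epsilon$ of $q$. The shear structure of $F_1$ forces $q_k=(z_k,z_{k-1})$ for $k\ge 1$ (with $z_{-1}$ the second coordinate of $q_0$), and the orbit condition $F_1(q_k)=q_{k+1}$ becomes the interpolation system
\[
f_1(z_k)=z_{k+1}+\delta z_{k-1},\qquad 0\le k\le N-1.
\]
The endpoints $(z_{-1},z_0)$ and $(z_{N-1},z_N)$ are fixed by the desired $q_0$ and $q_N$, while the intermediate $z_1,\ldots,z_{N-2}$ are free. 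By a Runge-type interpolation theorem for entire functions on $\mathbb{C}$ (with a few extra conditions imposed at the first coordinate of $\eta(F_0)$ in order to pin down the continuation $\eta(F_1)$), such an $f_1$ exists and can be taken arbitrarily close to $f_0$ in the compact-open topology, provided every prescribed increment $f_1(z_k)-f_0(z_k)$ at a node $z_k$ lying in a prescribed compact set $K\subset\mathbb{C}$ is small.

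The remaining, and main, task is to produce a pseudo-orbit from a point near $q$ to a point in $\Sigma^s_{loc}(\eta(F_1))$ whose jumps are small at nodes inside $K$ but may be arbitrarily large at nodes outside $K$. Following the template of \cite{AL}, I would split the route into three segments: first, let the initial $z_0,z_1,\ldots$ follow the genuine forward $F_0$-orbit of a point close to $q$ until the first coordinate exits $K$; second, perform one ``big'' redirection at a node $z_k$ of very large modulus, whose interpolated value is unconstrained on $K$; third, let the remaining $z_k$ interpolate along a genuine backward $F_0$-orbit ending at $p_0$, which is well-controlled because $|\delta|>1$ makes $F_0^{-1}$ volume-contracting and the Wiman--Valiron estimates from Section 2 govern the large-modulus behavior of $f_0$. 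Balancing these three stages so that all interpolation increments inside $K$ stay uniformly small and $F_1$ stays in $B_j$ is the delicate technical step, mirroring the core construction in \cite{AL} adapted to the Hénon shear structure.
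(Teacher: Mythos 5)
Your openness argument and the overall architecture for density (an interpolation along an assembled orbit, using the shear structure to reduce to one-variable Runge data, with a single large ``redirection'' node) match the paper's approach. But there is a genuine gap: you explicitly stop at what you call ``the remaining, and main, task'' and declare it a delicate balancing problem mirroring \cite{AL}. In fact, the paper shows that precisely this step is nearly trivial in the Hénon-with-$|\delta|>1$ setting, and the proposal misses the two small observations that make it so.

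Concretely, the paper first notes that because $|\delta|>1$ the map is volume-expanding, so there is a point $\tilde{q}$ arbitrarily close to $q$ whose forward $F$-orbit leaves any given bidisk; combined with the Hénon shear form this produces $F^N(\tilde{q})=(z_0,w_0)$ with $|z_0|>R$. Second, the stable manifold $\Sigma_F(\eta(F))$ must contain a point $(z_1,w_1)$ with $|w_1|>R$, because an invariant curve of an entire Hénon map cannot be a horizontal line. With both endpoints pushed outside the cylinder $\{|z|<R\}$, the ``big redirection'' is simply a two-step bridge $\tilde F^2(z_0,w_0)=(z_1,w_1)$ obtained by imposing $\tilde f(z_0)=w_1+\delta w_0$ and $\tilde f(w_1)=z_1+\delta z_0$ at the two large nodes $z_0$, $w_1$; since these nodes lie outside $\{|z|<R\}$, Runge approximation lets you prescribe them freely while keeping $\tilde f$ as close to $f$ as you like on $\{|z|<R\}$, so $\tilde F$ stays in $B_j$. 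No pseudo-orbit bookkeeping, no delicate stage balancing, and no Wiman--Valiron estimates (which you invoke but which are not used in this lemma) are needed. You also omit the genericity step of wiggling $(z_1,w_1)$ so that all the interpolation nodes have pairwise distinct first coordinates, which is required for the one-variable Runge interpolation to be well posed. In short: right plan, but the central construction is left as an unproved (and, as it turns out, overcomplicated) claim.
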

\begin{proof}
The fact that $U(q,\epsilon)$ is open follows from the fact that stable manifolds vary continuously under perturbations of the map. To prove density, consider any $F \in B_j$, fix a compact subset $K \subset \mathbb C^2$ and a $\delta>0$. We will prove that there exists $\tilde{F} \in B_j$ with $\|\tilde{F} - F\|_K < \delta$ for which there exists $\tilde{q} \in \Sigma_{\tilde{F}}(\eta(\tilde{F}))$
such that $\|\tilde{q} - q\| < \epsilon$.

Let $R>0$ be such that $K$ and the polydisk containing the local stable manifold of $\eta(F)$ are both contained in the cylinder $C_R := \{|z| < R\}$. Since $|\delta| > 1$ there is a point $\tilde{q}$ arbitrarily close to $q$ whose forward orbit leaves
the polydisk $\{|z|<R,|w|<R\}$. Since $F$ is a H\'enon map, it follows that the forward orbit of $q$ also leaves the  cylinder $C_R$,  say $F^N(\tilde{q}) = (z_0, w_0)$, with $|z_0|>R$.

We remark that this step in the proof is much simpler in our setting than it is for volume preserving automorphisms, where ideas from \cite{ClosingLemma} were used to prove that for an arbitrary small perturbation of $F$ the orbit of $q$ leaves an arbitrarily large ball. We also note that the stable manifold $\Sigma_F(\eta(F))$ must contain a point $(z_1,w_1)$ with $|w_1| > R$. If this was not the case, the stable manifold would have to be a straight horizontal line, which is impossible since it is invariant under an entire H\'enon map.

Let $N>0$ be such that the point   $(z_N, w_N):= F^{N-1}(z_1,w_1)$ lies in the local stable manifold.
By wiggling the point $(z_1, w_1)$ on $\Sigma_F(\eta(F))$ we may assume that for all points in the forward orbit
$$
(z_1, w_1) ,\ldots, (z_N, w_N),
$$
their $z$-coordinates are pairwise disjoint and disjoint from the $z$-coordinates of the points in the finite orbit from $\tilde{q}$ to $(z_0, w_0)$.

By Runge Approximation we can therefore find an entire function $\tilde{f}$ satisfying the following properties:
\begin{enumerate}
\item[(i)] $\tilde{f}$ is arbitrarily close to $f$ on the disk $\{|z| < R\}$.
\item[(ii)] $\tilde{f}(z_0) = w_1 + \delta w_0$ and $\tilde{f}(w_1) = z_1 + \delta z_0$.
\item[(iii)] $\tilde{f}$ is arbitrarily close to a constant on small discs centered at $z_0$ and $w_1$.
\item[(iv)] $\tilde{f}$ is equal to $f$ on the finite orbit from $(z_1, w_1)$ to $(z_N, w_N)$ and the finite orbit from $\tilde{q}$ to $(z_0, w_0)$, and is arbitrarily close to $f$ on given neighborhoods of these points.
\end{enumerate}

Condition (i) implies that $\tilde{F}$ can be chosen in $B_j$.

Consider $\mathcal{N}(\tilde{q})$ a small neighborhood of the point $\tilde{q}$. It follows from the above that a suitable iterate of $\tilde{F}: (z,w) \mapsto (\tilde{f} - \delta w, z)$ maps $\mathcal{N}(\tilde{q})$ to a neighborhood of $(z_0, w_0)$. Notice that $\tilde{F}^2(z_0, w_0) = (z_1, w_1)$, hence a further suitable iterate of $\tilde{F}$ maps $\mathcal{N}(\tilde{q})$ to a neighborhood $\mathcal{N}(z_N, w_N)$.

It is clear from (ii) - (iv) that the size of $\mathcal{N}(z_N, w_N)$ can be chosen uniformly, i.e. the radius of the inner ball centered at $(z_N, w_N)$ does not shrink to zero as $\tilde{F}$ approximates $F$ better and better on the cylinder. Hence condition (i) implies that the local stable manifold of $\Sigma_{\tilde{F}}(\eta(\tilde{F})$ can be made to intersect $\mathcal{N}(z_N, w_N)$. It follows that the global stable manifold of $\tilde{F}$ passes through $\mathcal{N}(\tilde{q})$, which completes the proof.
\end{proof}

\section{Pseudoconvexity of the Fatou set}

The goal of this section is to prove pseudoconvexity for all Fatou components of transcendental H\'enon maps, from which it follows that the Julia set is perfect. We recall that there are several non-equivalent definitions of the Fatou set, depending on the used compactification of $\mathbb C^2$. We will prove pseudoconvexity of components using the $\mathbb P^2$-Fatou set, the preferred definition from \cite{henon1}.

We recall that the Julia set being perfect for the $\widehat{\mathbb C^2}$-Fatou set was already proved in \cite{FS98}. We do not know whether components of the $\widehat{\mathbb C^2}$-Fatou set are necessarily pseudoconvex. The result from \cite{FS98} in fact holds for all holomorphic automorphisms not conjugate to an upper triangular map with a repelling fixed point. Note that for the inverse of such a triangular map all orbits converge to this fixed point.

\medskip

From now on we only refer to the $\mathbb P^2$-Fatou set, and we will show that Fatou components are pseudoconvex for all holomorphic automorphisms whose inverse admits an escaping point. Note that by Proposition \ref{prop:two} this includes all transcendental H\'enon maps. We recall that pseudoconvexity of \emph{recurrent} Fatou components of holomorphic automorphisms was already proved in \cite{FS98}. Not surprisingly, most of our efforts will go into dealing with unbounded orbits.

%Notice that for polynomial Henon maps every Fatou component $\Omega$ is pseudoconvex.
%Indeed, if $\Omega$ is not the escaping component, then for the Hartogs figure $H\subset \subset \Omega$ the forward %orbit is  bounded. By the maximum principle the forward orbit of the Hartogs hull $\hat H$ is also bounded, which %implies that $ \hat H\subset \Omega$.
%The escaping component admits the  plurisubharmonic exhaustion function $-G^+$.

Recall the following theorem by Levi \cite{Levi}, see also \cite{Ivashkovitch}.
\begin{thm}[Levi]\label{thm:Levi}
Let $H\subset \C^2$ be the standard Hartogs figure, and let $f$ be a meromorphic function on $H$. Then there exists a meromorphic function $F$ on $\hat H$ which extends $f$.
\end{thm}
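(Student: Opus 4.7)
The plan is to view $f$ as a holomorphic map $f : H \to \P^1$ and to extend it to a holomorphic map $\hat H \to \P^1$, which is the same data as a meromorphic function on $\hat H$. The extension problem is local near points $p_0 \in \hat H \setminus H$; interior points of $H$ need no work, and local extensions automatically glue by the identity principle for meromorphic functions on connected open sets.

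The main step is the local extension. Fix $p_0 \in \hat H \setminus H$ and choose a small polydisk neighborhood $U \ni p_0$ inside $\hat H$. On a suitable polydisk $V \subset U \cap H$, write $f|_V = g/h$ with $g,h$ coprime holomorphic, which is always possible on a polydisk since the local ring $\mathcal{O}$ is a UFD. Provided that $V$ together with some additional portion of $U \cap H$ forms a Hartogs figure whose envelope of holomorphy contains $U$, the classical Hartogs extension theorem promotes $g$ and $h$ to holomorphic functions $\tilde g, \tilde h$ on $U$, and $\tilde g / \tilde h$ is the desired meromorphic extension of $f$ to $U$. Uniqueness of meromorphic extension guarantees compatibility of these local extensions, producing a global meromorphic function $F$ on $\hat H$ that restricts to $f$ on $H$.

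The main obstacle is arranging the Hartogs figure/envelope step at every boundary point $p_0$. For the standard Hartogs figure $H$ this is a direct geometric check, because every point of $\hat H \setminus H$ admits many complex one-dimensional slices that meet $H$ in an annular region, and stacking such slices produces an embedded Hartogs figure whose envelope covers a neighborhood of $p_0$. An alternative, more intrinsic route---which I would use if the above gluing turned delicate across the polar set of $f$---is Levi's original graph-closure argument: consider the closure $\tilde \Gamma$ of the graph $\Gamma_f \subset H \times \P^1$ inside $\hat H \times \P^1$. Since $\P^1$ is compact, the projection $\tilde \Gamma \to \hat H$ is proper, and $\tilde \Gamma$ has locally finite Hausdorff $4$-measure (the graph's volume, measured with the product of the flat and Fubini--Study metrics, is controlled on compact subsets of $H$ and stays bounded across the polar locus because the target is compact). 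A Bishop/Remmert--Stein-type extension theorem then identifies $\tilde \Gamma$ with an analytic subset of $\hat H \times \P^1$, from which the meromorphic extension $F$ is recovered as the multivalued inverse of the proper projection $\tilde \Gamma \to \hat H$.
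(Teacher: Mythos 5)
The paper does not prove this statement; it quotes it as a known result and cites \cite{Levi} and \cite{Ivashkovitch}, so there is no in-paper proof to compare against and I evaluate your proposal directly. Your first route has a genuine gap at the Hartogs step: the coprime decomposition $f = g/h$ is obtained only on a polydisk $V$, yet to apply the holomorphic Hartogs extension theorem you need $g$ and $h$ defined on a Hartogs figure inside $U$, not merely on $V$. The local quotient representations on overlapping polydisks agree only up to multiplication by units, so they do not glue to a global pair $(g,h)$, and a global representation $f = g/h$ on the Hartogs figure itself is not available by the usual Poincar\'e-problem argument, since a Hartogs figure is not a domain of holomorphy — indeed, producing such a global quotient there is essentially equivalent to the theorem you are trying to prove.

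Your second route is much closer in spirit to \cite{Levi} and \cite{Ivashkovitch}, but the load-bearing assertion, that the closure $\tilde\Gamma$ of the graph in $\hat H\times\P^1$ has locally finite $4$-dimensional Hausdorff measure near $\partial H\cap\hat H$, is exactly the nontrivial content and is not justified by what you write. Compactness of $\P^1$ bounds the fibre direction but not the area of the graph as one approaches $\partial H$ from inside $H$, and $\hat H\setminus H$ is an open set rather than a thin one, so Remmert--Stein in its standard form does not apply and a Bishop-type theorem still requires the a priori area bound you have not established. A route that does close cleanly is to first extend the pole divisor of $f$ from $H$ to $\hat H$ (Hartogs extension for analytic hypersurfaces), then use that $\hat H$ is a polydisk to solve Cousin~II and obtain a single holomorphic defining function $h$ of the extended divisor, then extend the holomorphic function $fh$ across $\hat H$ by the ordinary Hartogs theorem, and finally divide. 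This is the global, compatible version of your first idea; the divisor-extension step is what replaces the non-glueable local $g/h$ decompositions.
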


Let us recall the following observation (Lemma 4.3) from~\cite{henon1}.
\begin{lem}\label{lem:boundary} Let $\Omega$ be a Fatou component. Then for any limit function $h$ on $\Omega$ either $h(\Omega)\subset \ell_\infty$ or $h(\Omega)\subset \C^2$.
\end{lem}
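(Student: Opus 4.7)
The plan is as follows. Since $h$ is a locally uniform limit of holomorphic maps into the compact complex manifold $\mathbb{P}^2$, $h\colon \Omega \to \mathbb{P}^2$ is itself holomorphic, so $Z := h^{-1}(\ell_\infty) \subset \Omega$ is an analytic subvariety (the preimage of the analytic hypersurface $\ell_\infty$). If $Z = \Omega$ there is nothing to prove, so assume $Z \subsetneq \Omega$; then $U := \Omega \setminus Z$ is open and dense, and $h|_U \colon U \to \C^2$ is holomorphic. It suffices to show that under this assumption $Z$ must be empty.

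Write the H\'enon iterates as $F^m = (A_m, A_{m-1})$ with $A_m \colon \C^2 \to \C$ entire, so that $A_m = f(A_{m-1}) - \delta A_{m-2}$. On $U$, the sequences $A_{n_k}$ and $A_{n_k-1}$ converge locally uniformly in $\C$ to the holomorphic coordinate functions of $h|_U$; by the H\'enon recursion
\[
A_{n_k - j - 2} \;=\; \frac{f(A_{n_k - j - 1}) - A_{n_k - j}}{\delta},
\]
the same holds inductively for every $A_{n_k - j}|_U$, yielding a holomorphic limit $\beta_j$ on $U$. Invoking the Hartogs/Levi extension theorem (Theorem~\ref{thm:Levi}) on Hartogs figures inside $U$ surrounding each point of $Z$ (which is possible because $Z$ has complex codimension $\ge 1$), each $\beta_j$ extends to a meromorphic function $\beta_j \colon \Omega \to \mathbb{P}^1$, not identically $\infty$ since it is finite on the dense set $U$. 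By construction, the identity $\beta_j = f(\beta_{j+1}) - \delta \beta_{j+2}$ holds on $U$ for every $j \ge 0$.

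The decisive use of transcendence now enters. If $\beta_{j+1}$ had a pole along a hypersurface $H \subset \Omega$, then in local coordinates near a generic point of $H$ one would have $\beta_{j+1} \sim 1/v$ for a local defining function $v$ of $H$, and $f(\beta_{j+1}) = f(1/v)$ would carry an essential singularity along $H$ because $f$ is essentially singular at $\infty$ (in particular, by Picard, takes almost all values in every neighbourhood of $H$). However, on $U$ one has $f(\beta_{j+1}) = \beta_j + \delta \beta_{j+2}$, and the right-hand side is meromorphic on all of $\Omega$, which is incompatible with an essential singularity along $H$. Therefore $\beta_{j+1}$ is pole-free, i.e.\ holomorphic on $\Omega$, for every $j \ge 0$. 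Hence every $\beta_j$ with $j \ge 1$ is holomorphic, and then $\beta_0 = f(\beta_1) - \delta \beta_2$ is holomorphic as well. Consequently $A_{n_k}$ and $A_{n_k-1}$ are locally uniformly bounded on all of $\Omega$ (boundedness on $U$ transfers to $\Omega$ by density and continuity), so $F^{n_k} \to (\beta_0, \beta_1)$ in $\C^2$ everywhere on $\Omega$, giving $h(\Omega) \subset \C^2$ and $Z = \emptyset$, contradicting the assumption.

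The main obstacle will be the passage from the local (on $U$) definition of the $\beta_j$ as bounded holomorphic limits to their meromorphic extension across $Z$ on all of $\Omega$; this is where the Hartogs/Levi-type extension theorem plays a crucial role, exploiting that $Z$ has complex codimension $\ge 1$ so that each point of $Z$ is surrounded by Hartogs figures lying inside $U$. Once the meromorphic framework on all of $\Omega$ is set up, the transcendence of $f$ is the decisive ingredient, translating via the H\'enon recurrence into the impossibility of nontrivial polar behaviour for the $\beta_j$, and thereby forcing the dichotomy.
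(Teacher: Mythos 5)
The crucial step—extending each $\beta_j$ meromorphically from $U=\Omega\setminus Z$ to all of $\Omega$ via Theorem~\ref{thm:Levi}—cannot work. When $Z=h^{-1}(\ell_\infty)$ is non-empty and proper it is an analytic set of complex \emph{codimension one} (the preimage of the divisor $\ell_\infty$ under the holomorphic map $h\colon\Omega\to\mathbb P^2$), and codimension one is not removable. Indeed, suppose a standard Hartogs figure $H$ were contained in $U$ with a point $p\in Z$ in its envelope $\hat H$. Choosing a defining function $g$ for $Z$ on $\hat H$, the function $1/g$ is holomorphic on $\hat H\setminus Z\supset H$ and, by the Hartogs phenomenon, would extend holomorphically to $\hat H$; by the identity theorem on the connected set $\hat H\setminus Z$ the extension agrees with $1/g$, which is unbounded near $p$, a contradiction. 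So every Hartogs figure whose envelope touches $Z$ already meets $Z$, and there is no Hartogs figure inside $U$ to which Theorem~\ref{thm:Levi} could be applied. (This is the same reason $e^{1/u}$ exists on $\C^{*}$: holomorphic functions on the complement of a hypersurface need not be meromorphic across it; codimension $\geq 2$, not $\geq 1$, is what Hartogs-type removability requires.) Since the meromorphic extension is what was supposed to make the transcendence/essential-singularity argument bite, the proof never gets started. A secondary gap: even if $\beta_0$ were holomorphic on all of $\Omega$, boundedness of the \emph{limit} on the dense open set $U$ would not ``transfer by density and continuity'' to local uniform boundedness or convergence of the \emph{sequence} $A_{n_k}$ near $Z$.

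The lemma is quoted from \cite{henon1} and not re-proved here, but its proof is considerably more elementary than your attempt and uses neither the H\'enon recursion nor the transcendence of $f$: it holds for any locally uniformly convergent sequence of holomorphic maps $\Omega\to\C^2\subset\mathbb P^2$. Suppose $Z$ is non-empty and proper and pick $p\in Z$; after permuting the first two homogeneous coordinates assume $h(p)=[a:b:0]$ with $a\neq 0$, so near $p$ the map $h$ takes values in the chart $\{x\neq 0\}$ and in its affine coordinates $h=(g_1,g_2)$ with $g_2(p)=0$ and $g_2\not\equiv 0$ (since $Z$ has empty interior). Writing $F^{n_k}=[A_k:B_k:1]$, uniform convergence $F^{n_k}\to h$ near $p$, with $h(p)$ bounded away from $\{x=0\}$, forces $A_k$ to be zero-free on a neighbourhood of $p$ for $k$ large, and then $1/A_k\to g_2$ uniformly there. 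Restricting to a one-dimensional slice through $p$ transverse to $\{g_2=0\}$, the nonvanishing functions $1/A_k$ converge to $g_2$, which has an isolated zero at $p$; Hurwitz's theorem then forces $g_2\equiv 0$, a contradiction. Hence $Z=\emptyset$, giving the dichotomy.
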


 \begin{theorem}\label{thm:Convexity of Fatou set}
Let $F$ be an automorphisms for which $F^{-1}$ has an escaping point.  Then every  Fatou component $\Omega$  of $F$ is pseudoconvex.
\end{theorem}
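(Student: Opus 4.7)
I would argue by contradiction. Assume $\Omega$ is a Fatou component that is not pseudoconvex; then there is a standard Hartogs figure $H\subset\Omega$ whose envelope $\hat H$ contains a point $p\notin\Omega$. The objective is to show that the family $\{F^n\}_{n}$, viewed as holomorphic maps $\hat H\to\mathbb P^2$, is normal on $\hat H$. Once that is achieved, $p$ lies in the Fatou set, and the connectedness of $\hat H$ combined with $H\subset\Omega$ forces $p\in\Omega$, a contradiction. After extracting a subsequence $(F^{n_k})$ converging on $\Omega$ to a limit $h\colon\Omega\to\mathbb P^2$, Lemma \ref{lem:boundary} provides the dichotomy $h(\Omega)\subset\mathbb C^2$ or $h(\Omega)\subset\ell^\infty$, and I would treat the two cases separately.

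If $h(\Omega)\subset\mathbb C^2$, then each coordinate of $F^{n_k}$ is an entire function locally uniformly bounded on $H$. The standard Hartogs extension formula, which expresses the value of a holomorphic function at the concave corner of $\hat H$ as a Cauchy-type integral over a circle sitting in the shell of $H$, preserves sup-norm bounds on compact subsets; applied coordinate-wise, and combined with uniqueness to identify the extension with the already entire $F^{n_k}$, this yields a uniform bound on compact subsets of $\hat H$. Montel's theorem then gives normality of $(F^{n_k})$ on $\hat H$, so $\hat H$ lies in the Fatou set, contradicting $p\in\hat H\setminus\Omega$.

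The delicate case is $h(\Omega)\subset\ell^\infty$, and this is where the hypothesis that $F^{-1}$ admits an escaping point enters. Choose a coordinate chart $U$ on $\mathbb P^2$ containing a neighborhood of $h(K)$ for a suitably chosen compact $K\subset H$; in this chart the iterates $F^{n_k}$ are represented by pairs of meromorphic functions, bounded on $K$, whose limit is the bounded map corresponding to $h|_K$. Levi's extension theorem (Theorem \ref{thm:Levi}), applied component-wise, produces meromorphic extensions to $\hat H$ which by uniqueness must coincide with the entire maps $F^{n_k}$ there. The main obstacle, and the technical heart of the proof, is to rule out \emph{bubbling} on $\hat H\setminus H$: a priori, $F^{n_k}$ could develop wild oscillations in that region and push its $\mathbb P^2$-image outside the chart $U$, destroying the uniform bounds needed to apply Montel. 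To exclude this I would use the given escaping orbit $F^{-n}(p^\ast)\to[1:0:0]$, together with the strong stable manifold of $F^{-1}$ obtained by applying Theorem \ref{thm:Curves of escaping points} to $F^{-1}$: this yields a Fatou component $\Omega^\ast$ of $F$ along which $F^{-n}$ is strongly contracting toward $\ell^\infty$. Pre-composing $F^{n_k}$ with a large fixed iterate $F^{-N}$ whose image sits near $\Omega^\ast$ translates the control problem near $\ell^\infty$ into a boundedness problem for iterates of $F$ on a compact region of $\mathbb C^2$, where Case 1-type arguments supply the missing uniform estimate. Converting the mere existence of a single escaping orbit for $F^{-1}$ into this global compactness statement is where I expect the real work to lie.
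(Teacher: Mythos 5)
Your overall frame---normality of $(F^{n_k})$ on $\hat H$, the dichotomy from Lemma~\ref{lem:boundary}, the trivial bounded case, and Levi's theorem for the $\ell_\infty$ case---matches the paper. But the mechanism you propose for exploiting the escaping orbit of $F^{-1}$ is the wrong one, and the step you flag as ``the real work'' is handled in the paper by an argument you do not anticipate.

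First, invoking Theorem~\ref{thm:Curves of escaping points} for $F^{-1}$ is not available here: Theorem~\ref{thm:Convexity of Fatou set} is stated for an arbitrary automorphism whose inverse has \emph{some} escaping point, whereas Theorem~\ref{thm:Curves of escaping points} requires $F$ to be a transcendental H\'enon map and the point to be an \emph{Eremenko} escaping point, for which the whole Wiman--Valiron scaffolding was set up. More fundamentally, even in the transcendental H\'enon case a stable manifold of an escaping orbit is not a Fatou component $\Omega^*$ ``along which $F^{-n}$ is strongly contracting toward $\ell_\infty$''; escaping points have no reason to lie in the Fatou set, and the contraction of Theorem~\ref{thm:Curves of escaping points} is along the leaf, not on a two-dimensional neighborhood, so it cannot convert ``control near $\ell_\infty$'' into a bounded-orbit problem as you suggest. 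The paper instead uses the escaping orbit in a very different and much more elementary way: it shows that there is $R>0$ so that for all large $k$ the image $F^{n_k}(\hat H)$ omits some point $p_k\in B(0,R)$. If not, then $F^{-n_k}$ would map a ball $B(0,R)\supset\supset\hat H$ into $\hat H$, producing an attracting fixed point for $F^{-n_k}$ whose basin contains $B(0,R)$; choosing $R$ large enough to contain the escaping point $q$ of $F^{-1}$ gives a contradiction, since $F^{-n_k}(q)$ would then converge. The omitted points $p_k$ then let one form radial projections $\pi_k$ from $p_k$ to $\ell_\infty$ and define $\varphi_k:=\pi_k\circ F^{n_k}$, which are holomorphic $\hat H\to\ell_\infty\cong\hat\C$ and converge to $\varphi$ on $H$. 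This is the single, decisive use of the hypothesis; no stable-manifold machinery is needed.

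Second, you do not address the indeterminacy locus of the Levi extension $\psi$ of $\varphi$ to $\hat H$, and this is where most of the actual work in the paper lies. Levi's theorem only gives a meromorphic $\psi$; a priori $\psi$ could have indeterminacy points on $\hat H\setminus H$, and one must both rule these out and show $\varphi_k\to\psi$ locally uniformly on $\hat H$. The paper does this via Lemma~\ref{familycurves} (constructing curves $S_y=\{f+C_{n,y}w^n=0\}$ with boundary in $\{|z|=1\}\subset\partial H$ that avoid a neighborhood of the zero or pole set except inside $H$), applying the maximum principle along these curves and Vitali's theorem to upgrade convergence from $H$ to $\hat H$ away from $Z\cup P$, and then excluding indeterminacy points by a Hurwitz-type argument on the resultant-like function $h(z)=\prod(\beta_n(z)-\alpha_m(z))$. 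None of this is suggested in your sketch, and it cannot be replaced by a Montel-in-a-chart argument because near a would-be indeterminacy point there is no chart of $\mathbb P^2$ in which the maps stay uniformly bounded. Finally, Levi's theorem is applied once to the single meromorphic function obtained by identifying $\ell_\infty$ with $\hat\C$, not ``component-wise to pairs of meromorphic functions,'' and the extension $\psi$ is a genuinely new object rather than something that coincides with the already-entire $F^{n_k}$.
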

\begin{proof}
%Suppose by contradiction  that there exists a Fatou component $\Omega$ which  is not pseudoconvex.
Let $H$ be a Hartogs figure relatively compact in $\Omega$.
We will show that the family $(F^n)$ is normal on the hull $\hat H$, and thus $\hat H\subset \Omega$.
If $(F^{n_k})$ is any subsequence of iterates, we will show that we can always extract a subsequence converging uniformly on compact sets to a holomorphic map $\psi\colon \hat H\to \P^2(\C)$.

If there exists $R>0$ so that $F^{n_k}(H)\subset  B(0,R)$ for all $k$, then by the maximum principle $F^{n_k}(\hat H)\subset   B(0,R)$ for all $k$ and thus $(F^{n_k})$ admits a convergent subsequence. On the other hand, if such $R$ does not exist, then there exists a sequence $(x_k)$ in $H$ and a subsequence $(n_k)$ such that
$\|F^{n_k}(x_k)\|\to +\infty$. Since $H$ is relatively compact in the Fatou set, up to extracting another subsequence we can assume that
$F^{n_k}|_H$ converges uniformly to a holomorphic map  $\varphi\colon H\to \mathbb{P}^2(\C)$. Clearly $\varphi(H)$ cannot be contained in $\C^2$, and thus by Lemma \ref{lem:boundary} it is contained in the line at infinity $\ell_\infty$.

We identify the line at infinity $\ell_\infty$ with the Riemann sphere $\hat{\C}$ via the map $[x:y:0]\mapsto y/x$.
%With this identification we have $\psi=\lim_{k\ra\infty}\frac{B_k}{A_k}$.
With this identification we can see $\varphi$ as a meromorphic function on $H$ without indeterminacy points.
By Hartogs extension theorem for meromorphic functions (see Theorem \ref{thm:Levi}),
 there exists a meromorphic function $\psi:\hat{H}\rightarrow \hat\C$ which extends $\phi$.
We claim that
\begin{itemize}
\item[(A)] $\psi$ has no points of indeterminacy (and thus is a holomorphic map $\psi\colon \hat{H}\to \ell_\infty$) and
the sequence $\varphi_k$ converges to $\psi$ uniformly on compact subsets of $\hat H$,
\item[(B)] the sequence $(F^{n_k})$ converges uniformly on compact subsets of $\hat{H}$ to $\psi$.
\end{itemize}

Let us first show that  there exists an $R>0$ so that for all large enough $k$ there exists a point
$p_k\in B(0,R)$ so that $p_k\not\in F^{n_k}(\hat{H})$. Indeed, otherwise for any given $R$ there exists arbitrarily large $k(R)$ so that $ B(0,R)$ is contained in $F^{n_{k(R)}}(\hat{H}).$ By assumption there exists an escaping point $q$ for $F^{-1}.$ Choose $R>\|q\|$ so large that $\hat{H}\subset\subset   B(0,R).$
Then $F^{-n_{k(R)}}$ maps $ B(0,R)$ into the subset $\hat{H}.$
This implies that $F^{-n_{k(R)}}$ has an attracting fixed point in $\hat{H}.$
Moreover the basin of attraction contains $B(0,R)$. This contradicts the fact  that $q $ is an escaping
point for $F^{-1}$ and hence for $F^{-n_{k(R)}} $ and gives the existence of the desired $p_k$.

Let $\pi_k$ denote the radial projection from $\mathbb C^2\setminus p_k$ to $\ell_\infty.$
Then the functions
$$\varphi_k:=\pi_k\circ F^{n_k}$$ are holomorphic on $\hat{H}$, and the sequence $(\phi_k)$  converges
to $\phi$ uniformly on $H.$
\\

\noindent {\bf Proof of Claim (A)}. Up to changing holomorphic coordinates we can assume that $\hat H$ is the unit polydisk and that $H$ is the Euclidean Hartogs figure.
The function $\psi$
 has a zero set $Z$ and a pole set $P$ in $\hat{H}.$
Likewise the functions $\phi_k$ have zero sets $Z_k$ and pole sets $P_k$ in $\hat H$.
Since $\varphi_k$ is holomorphic on $\hat H$, the sets $Z_k$ and $P_k$ do not intersect for any $k.$

\begin{lemma}\label{familycurves}
Let $X$ be an analytic  complex curve in $\hat H$. For every compact set $K\subset \hat H\setminus (X\cup\{w=0\})$ there exists  a neighborhood $U$ of $X$  such that for all $y\in K$ there exists a complex analytic curve $S_y\subset \hat H$ containing $y$ which intersects $U$ only inside $H$
and whose boundary $\partial S_y$ is contained in $\{|z|=1\}\subset \partial H$.
\end{lemma}

\begin{proof}[Proof of Lemma \ref{familycurves}]
Let $f\colon \hat H\to \C$ be a holomorphic function such that $X=\{f=0\}$.
Up to considering a smaller Hartogs figure we can assume that $f$ is bounded and, up to multiplication by a non-zero constant, that $|f|<1$ on $\hat{H}.$ Let us define the family of functions $f_{C,n}:\Hh\ra\C$ by
 $$
 f_{C,n}:=f+Cw^n, \quad n\in \N, C\in \C.
 $$

Let $y=(z_0,w_0)$ be a point in $K$.
For every $n$ the point  $y$ is in the zero locus of the function $f_{C,n}$ for the constant $C_{n,y}=\frac{-f(z_0,w_0)}{w_0^n}\ra\infty$.
If $n$ is large enough, then $|f(z,w)+C_{n,y} w^n|>1$ for $|w|=1$, $y\in K$.
This implies that for all $y\in K$ the curve  $$S_y:=\{   f+C_{n,y}w^n=0\} $$ does not intersect the part of the boundary of $\Hh$ at which $ |w|=1$. Therefore we have that $\partial S_y$ is contained in $\{|z|=1\}\subset \partial H$.
Notice that any curve $ S_y$ intersects $X$ only in $\{w=0\}$. Hence there exists a neighborhood $U$ of $X$ such that
$S_y\cap U\subset H$. By compactness of $K$ and the continuity of the family of curves $S_y$ we obtain the result.
\end{proof}

%and with
%$$\partial S_y\subset \overline H\setminus U.$$
%\end{lemma}
%%% ricorda: gli aperti |f|<\epsilon sono un sistema fondamentale di intorni di X (dimostraz di miriam)

Let $K\subset \hat{H}\setminus (P \cup \{w=0\})$ be a compact subset. Let $U$ be the neighborhood given by  Lemma \ref{familycurves} with $X=P$. There exists a constant $C>0$ such that $\{|\psi|>C\}$ is contained in $U$. By the maximum principle
applied to the sets $S_y\setminus U$,
the sequence  $\varphi_k\colon \hat H\to \hat \C$ is uniformly bounded on $K$ by Lemma \ref{familycurves}.
Hence by Vitali's Theorem the sequence  $(\varphi_k)$ converges to $\psi$ uniformly on compact subsets of  $\hat{H}\setminus P$. Arguing similarly we obtain convergence on compact subsets of  $\hat{H}\setminus Z$, and thus on the complement of indeterminacy points of $\psi$.

To finish the proof of  Claim (A), we need to prove that there are no indeterminacy points.

Suppose that there is at least one indeterminacy point.
By a linear change of coordinates  we can assume that the indeterminacy point is at the origin,
and that there exists a small $\delta_2>0$ such that $Z$ and $P$ intersect  the vertical disc $\{0\}\times \D(0,\delta_2)$ only at the origin.
Moreover, there exists a small $\delta_1>0$ such that the intersections of $Z$ and $P$ with the bidisk $\Delta:=\Delta(0, (\delta_1,\delta_2))$ project as a proper map to $\D(0,\delta_1)$.
Up to further decreasing $\delta_1$ and $\delta_2$ if necessary, we can assume that the origin is the only indeterminacy point of $\psi$.
Since $\varphi_k$ converges to $\psi$ uniformly on $\Delta\setminus\{0\}$ the analytic sets $Z_k$ and $P_k$ cannot intersect the set $|w|=\delta_2$, and thus they also project as a proper map to $\D(0,\delta_1)$.
It follows that $Z,P, Z_k,P_k$ can be considered as branched coverings over the $z$-variable
with finitely many branch points in $D(0,\delta_1)$.

The intersections of $Z$ and $P$ with the bidisk  $\Delta$
 can be written, counting multiplicities, as
 $$Z\cap \Delta=\{w=\alpha_m(z)\colon m=1,\dots ,M\} ,$$
$$P\cap \Delta=\{w=\beta_n(z)\colon n=1,\dots, N\},$$
where $\alpha_m$ and $\beta_n$ are multifunctions from $\D(0,\delta_1)$ to $\D(0,\delta_2)$.

We claim that  for large enough $k$  the intersections of $Z_k$ and $P_k$ with the bidisk  $\Delta$ can be written similarly, using functions $\alpha_m^k,\beta_n^k\colon \D(0,\delta_1)\to \D(0,\delta_2)$, with the same index sets:
$$Z_k\cap \Delta=\{w=\alpha^k_m(z)\colon m=1,\dots ,M\} ,$$
$$P_k\cap \Delta=\{w=\beta^k_n(z)\colon n=1,\dots, N\}.$$
Indeed, let $z_0$ be a point in the punctured disc $\D(0,\delta_1)\setminus\{0\}$. Then $w\mapsto \varphi_k(z_0,w)$ converges uniformly to $w\mapsto \psi(z_0,w)$ on $\overline\D(0,\delta_2)$, and thus for large $k$ they have the same number of zeros and poles in the disc $ \D(0,\delta_2)$. By continuity, the number of zeros and poles in the disc $\D(0,\delta_2)$  of the  map $w\mapsto \varphi_k(z,w)$ does not depend on $z$, which proves the claim.

Consider the  functions  $$h(z):=\prod_{\substack{n=1,\dots,N \\m= 1,\dots, M}}(\beta_n(z)-\alpha_m(z)),\quad h_k(z):=\prod_{\substack{n=1,\dots,N \\m= 1,\dots, M}}(\beta^k_n(z)-\alpha^k_m(z)),$$ which are  holomorphic functions defined on $\D(0,\delta_1)$ in the complement of the branch points. They extend holomorphically across the branch points.
Notice that $h$ has a zero only at the origin, while every $h_k$ is zero-free.
Since $g_k\to g$ uniformly on compact subsets of $\B(0,\epsilon)\setminus \{0\}$, it follows that
$h_k\to h$ uniformly on compact subsets of $\D(0,\delta_1)\setminus \{0\}$.
But since $h$ is defined and holomorphic at the origin, it follows that $h_k\to h$ uniformly on $ \D(0,\delta_1)$.
But this contradicts Hurwitz's theorem.
\\

\noindent {\bf Proof of Claim (B)}. Since the sequence $\varphi_k=\pi_k\circ F^{n_k}$ converges to $\psi$ uniformly on compact subsets of $\hat H$,
Claim (B)  immediately follows once we prove that $F^{n_k}\ra\ell_\infty$ as $k\ra\infty$ uniformly on compact subsets of $\hat{H}$. Let us define  $G_k:=F^{n_k}-p_k=(A_k,B_k)$ for some holomorphic functions $A_k, B_k:\C^2\ra\C$.
Notice that $\psi=\lim_{k\ra\infty} \pi_0\circ G_k$, and that $F^{n_k}\ra\ell_\infty$  if and only if $G_{k}\ra\ell_\infty$, since all $p_k$ are contained in a bounded set.
Our choice of   identification $\ell_\infty\simeq \hat \C$ gives $\psi=\lim_{k\ra\infty}\frac{B_k}{A_k}$. Let $Z$ and $P$ denote the zeros and the poles of   $\psi$ respectively. Since $\psi$ has no indeterminacy points, $Z$ and $P$ are disjoint.

Let $K\subset \hat{H}\setminus (Z \cup \{w=0\})$ be a compact subset. Let $U$ be the neighborhood given by  Lemma \ref{familycurves} with $X=Z$. There exists a constant $C>0$ such that $\{|\psi|<C\}$ is contained in $U$.
Let $y\in K$ and let $S_y$ be the complex analytic curve given by Lemma \ref{familycurves}.
Recall that on $\bar H$ we have that $\|G_k\|\to \infty$, and notice that on
$\bar H\setminus U$ we have $\frac{B_k}{A_k}\leq \frac{C}{2}$ for big enough $k$. Hence $B_k\to \infty$ uniformly on $\bar H\setminus U$. By the maximum principle applied to $\frac{1}{B_k}$ on the set $S_y\setminus U$ we obtain that $B_k\to \infty$ on $y$, uniformly over all $y \in K$.
One can argue similarly for a compact set $K\subset \hat{H}\setminus (P \cup \{w=0\})$.

\end{proof}

Since the inverse of transcendental Henon maps are also transcendental Henon maps and have an escaping point by Theorem~\ref{eremenkotheorem}, we have the following corollary.
\begin{corollary}
Let $F$ be a transcendental Henon map. Then every Fatou component $U$ is pseudoconvex, and thus the Julia set of $F$ has no isolated points.
\end{corollary}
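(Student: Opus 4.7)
The plan is to combine Theorem~\ref{thm:Convexity of Fatou set} with the escaping-orbit result of Theorem~\ref{eremenkotheorem}, the only delicate step being that Theorem~\ref{thm:Convexity of Fatou set} requires an escaping orbit for $F^{-1}$, not for $F$ itself. Once that hypothesis is verified, the pseudoconvexity statement is immediate, and the statement about isolated points of $J_F$ follows from a standard Hartogs-type obstruction.

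To verify the hypothesis I would first compute the inverse explicitly: from $F(z,w)=(f(z)-\delta w,z)$ one reads off $F^{-1}(z,w)=(w,\delta^{-1}(f(w)-z))$. Conjugating by the coordinate swap $\sigma(z,w)=(w,z)$ gives
\[
\sigma\circ F^{-1}\circ \sigma^{-1}(z,w)=\bigl(\delta^{-1}f(z)-\delta^{-1}w,\; z\bigr),
\]
which is again a transcendental H\'enon map in the sense of the paper, with transcendental entire component $\delta^{-1}f$ and Jacobian parameter $\delta^{-1}$. Theorem~\ref{eremenkotheorem} then provides infinitely many escaping orbits for this conjugate, and transporting them through $\sigma$ yields escaping orbits for $F^{-1}$.

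With the hypothesis of Theorem~\ref{thm:Convexity of Fatou set} in place, every $\mathbb{P}^2$-Fatou component of $F$ is pseudoconvex. To conclude that $J_F$ has no isolated points, I would argue by contradiction: if $p\in J_F$ were isolated, some small punctured ball $B(p,r)\setminus\{p\}$ would be contained in a single Fatou component $\Omega$. But a punctured ball in $\mathbb{C}^2$ fails to be pseudoconvex (by Hartogs' Kugelsatz, any holomorphic function on the puncture extends across $p$, so the puncture lies in the envelope of holomorphy), contradicting the pseudoconvexity of $\Omega$.

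The only potential obstacle is purely bookkeeping: one must check that the inverse of a transcendental H\'enon map really does fall into the class to which Theorem~\ref{eremenkotheorem} applies, and that Theorem~\ref{thm:Convexity of Fatou set} is insensitive to the coordinate swap used above. Both are routine, so no new analytic input is required beyond the results already established in the paper.
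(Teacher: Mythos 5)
Your proposal is correct and takes essentially the same route as the paper: the paper's one-line proof notes that $F^{-1}$ is (conjugate to) a transcendental H\'enon map, so Theorem~\ref{eremenkotheorem} gives it an escaping orbit and Theorem~\ref{thm:Convexity of Fatou set} applies; you merely spell out the coordinate swap $\sigma\circ F^{-1}\circ\sigma^{-1}=(\delta^{-1}f(z)-\delta^{-1}w,z)$ explicitly. The closing Hartogs/punctured-ball step is the same observation the paper itself makes in the introduction.
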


\bibliographystyle{amsalpha}

\bibliography{Henon2}

\end{document}